\theoremstyle{plain}
\newtheorem{theorem}{Theorem}[section]
\newtheorem{lemma}[theorem]{Lemma}
\newtheorem{proposition}[theorem]{Proposition}
\newtheorem{corollary}[theorem]{Corollary}
\theoremstyle{definition}
\newtheorem{remark}[theorem]{Remark}
\numberwithin{equation}{section}
\title[The Frechet derivatives of stratified steady Stokes waves]
{Spectral properties of the Frechet derivatives of stratified steady Stokes waves}
\author{Vladimir Kozlov$^a$$^b$}
\address{$^a$Department of Mathematics, Link\"oping University, Link\"oping, Sweden}
\address{$^b$ Euler International Mathematical Institute (EIMI), Saint Petersburg, Russia}
\begin{document}
	
\begin{abstract}
We consider stratified steady water waves in a two dimensional channel. Our main
subject is spectral properties of the Frechet derivatives of steady
water Stokes waves. One of main results is the absence of subharmonic water waves
 in a neighborhood of a Stokes wave. The main
assumption is formulated in terms
of the eigenvalues of the Frechet derivative evaluated at this wave and considered
in the class of periodic solutions of the same period. The first
eigenvalue is always negative. We show that if
 the second eigenvalue is positive then there are no waves with multiple periods
in a neighborhood of the Stokes wave.


\end{abstract}

\maketitle

\section{Introduction}

We consider stratified steady water waves in a two-dimensional channel.
We use the classical formulation of the problem based on the Euler equations.
The surface tension is neglected and the water motion can be rotational. The aim of
this paper is to present some results on uniqueness of water waves.

The problem of uniqueness for water waves is connected with the Benjamin-Lighthill
conjecture, which suggests
estimates for the main parameters of the problem, such that the volume flux, the Bernoulli
constant  and the flow force.  In the case of incompressible and irrotational fluid such
estimates were proved in \cite{BL54}, \cite{B95} and \cite{KN75}. For incompressible and
rotational fluid such estimates were obtained \cite{Lok1}, where the main references in
this field can be found. We can mention here also the papers \cite{KN13}, \cite{KN13a} and
\cite{KLN17}, where uniqueness results are proved for near critical Bernoulli constant
for  incompressible fluid.

Here we study the following local uniqueness property. Assume that we have a certain
periodic, even solution to the water waves problem. Is this solution unique in a small
neighborhood?
We give certain conditions on the first eigenvalues of the Frechet derivative, which
guarantee such uniqueness for periodic waves in the class of solutions with
multiple periods.

\subsection{Statement of the main result}\label{SOkt11a}

Our object of study is two-dimensional stratified steady water waves  traveling
with constant speed $c$ under the influence of gravity. To eliminate the dependence on
time we use a moving reference frame, where the fluid occupies a  domain
 $$
D=D_\xi=\{(x,y)\,:\,-d<y<\xi(x),\;x\in\Bbb R\}
$$
in the channel with  the flat bottom $B$ given by $y=-d$ and with the free surface of
the flow $S=S_\xi$ given by $y=\xi(x)$. The density of the fluid $\rho$, defined
 in $\overline{D}$, is assumed to be positive and not necessarily constant.
 To describe a water wave we use the velocity of the flow $(u,v)$, the pressure $P$ 
 and the density $\rho$.
 Corresponding relations can be found in the paper Walsh \cite{Wal}. We recall these
 relations for readers convenience:
\begin{equation}\label{Se17a}
u_x+v_y=0\;\;\mbox{in $D$}\;\;\mbox{(incompressibility)}
\end{equation}
\begin{equation}\label{M3a}
(u-c)\rho_x+v\rho_y=0\;\;\mbox{in $D$}\;\;\mbox{(the conservation of mass)}
\end{equation}
and the conservation of momentum
\begin{eqnarray}\label{Se17aa}
&&(u-c)u_x+vu_y+\frac{P_x}{\rho}=0\;\;\mbox{in $D$},\nonumber\\
&&(u-c)v_x+vv_y+\frac{P_y}{\rho}=-g\;\;\mbox{in $D$},
\end{eqnarray}
where
 $g$ is the gravitational constant. 
The boundary conditions are
\begin{equation}\label{Sep18a}
v=(u-c)\xi_x\;\;\mbox{and}\;\;P=P_{atm}\;\;\mbox{on $S$},
\end{equation}
where $P_{atm}$ is the atmospheric pressure, and
\begin{equation}\label{Sep18aa}
v=0\;\;\mbox{on $B$}.
\end{equation}

The pseudostream function $\psi=\psi(x,y)$ is defined by
$$
\psi_x(x,y)=-\sqrt{\rho}v(x,y),\,\;\psi_y(x,y)=\sqrt{\rho}(u(x,y)-c).
$$
Then  equation  (\ref{Se17a}) is satisfied. The  relative pseudomass is
$$
p_0=\int_{-d}^{\xi(x)}\sqrt{\rho}(u(x,y)-c)dY
$$
and it does not depend on $x$.
From (\ref{M3a}) it follows that the function $\rho$ is constant along
the stream lines. If
\begin{equation}\label{Ma13a}
\psi_y\leq 0\;\;\mbox{in $\overline{D}$},
\end{equation}
then $\rho$ can be written as
\begin{equation}\label{Ma13aa}
\rho=\rho(-\psi).
\end{equation}
In what follows we will assume that $p_0<0$ and that the function $\rho$ is a positive
function of $-\psi$  even in the case, when $\psi$ is not monotone with respect to $y$.

One can verify  that the quantity
\begin{equation}\label{J25a}
E:=\frac{1}{2}(\psi_x^2+\psi_y^2)+ P+g\rho y
\end{equation}
is constant along the stream lines of $\psi$. This allows  to define
 the Bernoulli function
\begin{equation}\label{Ma13b}
\beta(\psi)=\frac{d E}{d\psi}.
\end{equation}
It can be verified that
\begin{equation}\label{Ma13ba}
\beta (\psi)=\Delta\psi-gy\rho'(-\psi)=0\;\;\mbox{in $D$}.
\end{equation}
Boundary conditions for $\psi$ are
\begin{equation}\label{Ma13bb}
\psi(x,\xi(x))=0\;\;\mbox{and}\;\;\psi(x,-d)=-p_0
\end{equation}
together with the Bernoulli boundary condition
\begin{equation}\label{Ma13bc}
\frac{1}{2}|\nabla\psi|^2+g\rho(0)(\xi(x)+d)=R,
\end{equation}
which is obtained from (\ref{J25a}) by setting $y=\xi(x)$ and using that
$\psi(x,\xi(x))=0$. The constant $R$ is called the Bernoulli constant.

We set
\begin{equation}\label{Okt5a}
\omega(y,\psi)=-gy\rho'(-\psi)-\beta(\psi).
\end{equation}
Then equation (\ref{Ma13ba}) takes the form
\begin{equation}\label{Okt5aa}
\Delta\psi+\omega(y,\psi)=0\;\;\mbox{in $D$}.
\end{equation}

In order to formulate smoothness assumptions, let us introduce some functional spaces.
For $k=0,1,\ldots$, $\alpha\in(0,1)$ and $-\infty<a<b<\infty$
we denote by $C^{k,\alpha}(\overline{D}_{a,b})$ and $C^{k,\alpha}([a,b])$
the H\"older spaces of functions in $\overline{D}_{a,b}=\{(x,y)\in \overline{D}
\,:\,a\leq x\leq b\}$ and $[a,b]$ respectively.

The space $C^{k,\alpha}_{\textrm{b}}(\overline{D})$ consists of functions
$u$ defined on $\overline{D}$
such that
$$
\sup_{a\in \Bbb R}||w||_{C^{k,\alpha}(\overline{D}_{a,a+1})}<\infty.
$$
 The subspaces   $C^{k,\alpha}_{\textrm{per}}(\overline{D})$,
$C^{k,\alpha}_{0,e,\textrm{per}}(\overline{D})$  consists of
$\Lambda$-periodic ($\Lambda$--periodic, even) functions in
$C^{k,\alpha}_{\textrm{b}}(\overline{D})$ vanishing for $y=-d$.

Similarly, one can define the spaces $C^{k,\alpha}_{\textrm{b}}(\Bbb R)$,
$C^{k,\alpha}_{\textrm{per}}(\Bbb R)$ and
$C^{k,\alpha}_{e,\textrm{per}}(\Bbb R)$.

We will assume that the density function $\rho$ is of class $C^{2,\alpha}$
for a certain $\alpha\in (0,1)$ and the Bernoulli function $\beta$ is of class
$C^{1,\alpha}$. In what follows the solution
$\psi\in C^{2,\alpha}_{0,e,\textrm{per}}(\overline{D})$
is assumed to be fixed. We always assume that $\psi_y\neq 0$ on the free surface $S_\xi$,
which together with (\ref{Ma13bb}) and (\ref{Ma13bc})  implies
 $\xi\in C^{2,\alpha}_{e,\textrm{per}}(\Bbb R)$.

\begin{remark} The functions $\rho$ and $\beta$ are more or less arbitrary and are chosen
to model some important properties of the flow under consideration. For example to model
a unidirectional flow we assume that
\begin{equation}\label{Ma18a}
u-c<0\;\;\mbox{in $\overline{D}$},
\end{equation}
which implies
$$
\psi_y<0\;\;\mbox{inside $\overline{D}$}.
$$
In this case $p_0$ is negative and the function $\rho$ can be chosen to guarantee
the monotonicity of the function $\rho(-\psi)$.

The next example involves a linear function $\rho$ with respect to $\psi$, and a constant
$\beta$, see \cite{Esch}. This choice allows
to include in consideration flows with counter-currents.
\end{remark}

\subsection{The Frechet derivative}\label{SOkt13ab}

Let us present a formal derivation of the Frechet derivative for the nonlinear operator in
the problem (\ref{Ma13ba})--(\ref{Ma13bc}) at a solution $(\hat{\psi},\hat{\xi})$.
We consider a small perturbation of this solution
$$
\psi=\hat{\psi}+tu,\;\;\xi=\hat{\xi}+t\tilde{\xi}.
$$
Inserting these functions in (\ref{Ma13ba})--(\ref{Ma13bc}) and collecting terms
of order $t$, we get the following expressions
\begin{equation}\label{Se28a}
\Delta u+gy\rho^{''}(-\hat{\psi})u-\beta' (\hat{\psi})u\;\;\mbox{in $D$},
\end{equation}
\begin{equation}\label{Se28ab}
\nabla\hat{\psi}\cdot\nabla u+\partial_y|\nabla\hat{\psi}|^2\tilde{\xi}
+g\rho(0)\tilde{\xi}(x),
\end{equation}
defined on the functions subject to
\begin{equation}\label{Se28aa}
u(x,\hat{\xi}(x))+\hat{\psi}_y(x,\hat{\xi}(x)\tilde{\xi}=0\;\;\mbox{and}\;\;u(x,-d)=0.
\end{equation}
From the first relation in (\ref{Se28aa}) we find
$$
\tilde{\xi}=-\frac{u}{\hat{\psi}_y}\;\;\mbox{for $y=\hat{\xi}(x)$}
$$
and rewrite the expressions  (\ref{Se28a}) and (\ref{Se28ab}) as
\begin{eqnarray}\label{Se28b}
&&A(x,y,\partial_x,\partial_y)u=A u:=\Delta u+gy\rho^{''}(-\psi)u-\beta' (\psi)u\;\;
\mbox{in $D$},\nonumber\\
&&B(x,y,\partial_x,\partial_y)u=B u:=\nabla\hat{\psi}\cdot\nabla u-\sigma(x)u\;\;
\mbox{for $y=\hat{\xi}(x)$}
\end{eqnarray}
defined on functions subject to
\begin{equation}\label{Se28ad}
u(x,-d)=0.
\end{equation}
Here
\begin{equation}\label{Se28ba}
\sigma(x)=\frac{\nabla\hat{\psi}\cdot\partial_y\nabla\hat{\psi}+g\rho(0)}
{\hat{\psi}_y},\;\;\mbox{where $y=\hat{\xi}(x)$}.
\end{equation}
In what follows we will use the notation
\begin{equation}\label{Okt6b}
\omega_*=\omega_*(x,y)=gy\rho^{''}(-\hat{\psi}(x,y))-\beta' (\hat{\psi}(x,y)).
\end{equation}
Our smoothness assumptions lead to
$$
\omega_*\in C^{0,\alpha}_{0,e,\textrm{per}}(\overline{\Omega})\;\;\mbox{and}\;\;
\sigma\in C^{0,\alpha}_{0,e,\textrm{per}}(\Bbb R).
$$

There are several spectral problems connected with the Frechet derivative. The first one,
 which is the main subject of the paper, is the following
 \begin{eqnarray}\label{Se28bd}
&&((\partial_x+z)^2+\partial_y^2)
u+\omega_*(x,y)u=0\;\;\mbox{in $D$},\nonumber\\
&&\nabla\hat{\psi}\cdot (\partial_x+z,\partial_y) u-\sigma(x)u=0\;\;
\mbox{for $y=\hat{\xi}(x)$},
\end{eqnarray}
 where $z\in\Bbb C$ is a spectral parameter.
  Here the function $u$ is $\Lambda$-periodic and subject to (\ref{Se28ad}).
  We do not assume that the function $u$ is even in this spectral problem.
  If $(z,u)$ solves the problem then the pair $(z+ik\tau_*,e^{-k\tau_*x}u)$, where
  $\tau_*=2\pi/\Lambda$ and $k$ is an integer, is also
  a solution to this problem. The spectral parameter $z$ is usually called quasimomentum.
This problem is important in study of asymptotic behaviour of solutions of boundary
value problem with
periodic coefficients (see \cite{N1}) and corresponding nonlinear problems
(see \cite{KT})). The problem (\ref{Se28bd}) appears when we are looking for the Floquet
solutions
$$
U(x,y)=e^{zx}\sum_{k=0}^Na_k(x,y)x^k
$$
of the problem
 \begin{eqnarray}\label{Okt26a}
&&(\partial_x^2+\partial_y^2)
U+\omega_*(x,y)U=0\;\;\mbox{in $D$},\nonumber\\
&&\nabla\hat{\psi}\cdot\nabla U-\sigma(x)U=0\;\;
\mbox{for $y=\hat{\xi}(x)$},\nonumber\\
&&U(x,-d)=0.
\end{eqnarray}
Here $a_k$ are $\Lambda$--periodic coefficients.

 The second spectral problem is
 \begin{eqnarray}\label{Se28be}
&&\Delta u+\omega_*(x,y)u+\mu u=0\;\;\mbox{in $D$},\nonumber\\
&&\nabla\hat{\psi}\cdot\nabla u-\sigma(x)u=0\;\;\mbox{for $y=\hat{\xi}(x)$},\nonumber\\
&&u(x,-d)=0,
\end{eqnarray}
This problem is equivalent to the spectral problem for the Frechet derivative after
application of partial hodograph transformation, which can be used for
unidirectional flows, i.e. $\psi_y\neq 0$ inside $\overline{D}$, see Sect. \ref{SOkt13a}
and Sect. \ref{SOkt13aa}.

Finally, the third spectral problem has the form
\begin{eqnarray}\label{Se28bf}
&&\Delta u+\omega_*(x,y)u=0\;\;\mbox{in $D$},\nonumber\\
&&\nabla\hat{\psi}\cdot\nabla u-\sigma(x)u=\theta u\;\;\mbox{for $y=\hat{\xi}(x)$},
\nonumber\\
&&u(x,-d)=0.
\end{eqnarray}
This problem appears in the case $\psi_y\neq 0$ on $S$, see Sect. \ref{SOkt13b}.

Usually the Frechet derivatives are defined for operators acting in fixed spaces.
In order to define the Frechet derivatives for operators connected to a
free boundary value problem it is useful to reduce it to a problem defined
in a fixed domain. It can be done by using a flattening change of variables.
We consider two cases. The first one serves for unidirectional flows, i.e.
$\hat{\psi}_y <0$ in $\overline{D}$. In this case we can use the partial hodograph
transformation. The second case covers the problems when
$\hat{\psi}_y(x,\hat{\xi}(x))<0$.

\subsection{Results}

In Sect. \ref{SOkt10b} we prove that the spectral problems (\ref{Se28be}) and (\ref{Se28bf})
has the same number of negative eigenvalues. Namely, let
\begin{equation}\label{Okt5b}
{\bf a}(u,v)=
\int_\Omega (\nabla u\cdot\nabla\overline{v}-\omega_*u\overline{v})dxdy
-\int_{-\Lambda/2}^{\Lambda/2}\sigma(x)u(x,\xi(x))\overline{v}(x,\xi(x))\frac{dx}{\psi_y},
\end{equation}
where
\begin{equation}\label{Okt6a}
\Omega=\{(x,y)\,:\,x\in (-\Lambda/2,\Lambda/2),\;y\in (-d,\xi(x))\}.
\end{equation}
We assume that
\begin{equation}\label{Okt5ba}
a(u,u)>0\;\;\mbox{for all nonzero $u\in \hat{H}^1(\Omega)$},
\end{equation}
where $\hat{H}^1(\Omega)$ consists of functions $u$ in $H^1(\Omega)$ such that
$u(-\Lambda/2,y)=u(\Lambda/2,y)$ and $u(x,-d)=u(x,\xi(x))=0$.

Inequality (\ref{Okt5ba}) is true in the case of a unidirectional flow or a flow
with constant vorticity and linear density.

\begin{proposition}\label{Pm23} Assume that (\ref{Okt5ba}) is valid.
Let $a$ and $b$ be continuous, even, positive
and  $\Lambda$-periodic functions defined in $\overline{D} $ and
in $\Bbb R$ respectively.
Consider the spectral problems
\begin{eqnarray}\label{F20aa}
&&Au+\mu au=0\;\;\mbox{in $\Omega$,}\nonumber\\
&&B u=0\;\;\mbox{for $y=\xi(x)$,}\nonumber\\
&&u=0\;\;\mbox{for $p=-d$}
\end{eqnarray}
and
\begin{eqnarray}\label{F20a}
&&A u=0\;\;\mbox{in $\Omega$,}\nonumber\\
&&B u=\theta bu\;\;\mbox{for $y=\xi(x)$,}\nonumber\\
&&u=0\;\;\mbox{for $y=-d$},
\end{eqnarray}
where $\mu$ and $\theta$ are spectral parameters and $u$ is an even, $\Lambda$--periodic
function. Then these problems
 have the same number of negative eigenvalues
(accounting their multiplicities). Moreover, if we consider these spectral problems
for $M\Lambda$-periodic, even functions, then they have the same number
of negative eigenvalues also.
\end{proposition}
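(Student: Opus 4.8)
The plan is to observe that both problems are generalized eigenvalue problems for one and the same quadratic form $\mathbf{a}$ from (\ref{Okt5b}), differing only in the mass form, and that hypothesis (\ref{Okt5ba}) confines all of the negativity of $\mathbf{a}$ to the trace on $S$. First I would derive the weak formulations. Testing the interior equation in (\ref{F20aa}) against an even, $\Lambda$-periodic $\bar v$ vanishing on $B$, integrating over $\Omega$ and applying Green's identity, the surface contribution $\int_S(\partial_n u)\bar v\,ds$ equals $\int(\nabla\hat\psi\cdot\nabla u)\bar v\,dx/\psi_y$, and on $S$ the condition $Bu=0$ gives $\nabla\hat\psi\cdot\nabla u=\sigma u$; collecting terms yields $\mathbf{a}(u,v)=\mu\int_\Omega a\,u\bar v\,dxdy$. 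The same computation applied to (\ref{F20a}), where now $\nabla\hat\psi\cdot\nabla u=\sigma u+\theta b u$ on $S$, gives $\mathbf{a}(u,v)=\theta\int_{-\Lambda/2}^{\Lambda/2}b\,u\bar v\,dx/\psi_y$. Thus (\ref{F20aa}) pairs $\mathbf{a}$ against the positive bulk mass $\int_\Omega a|u|^2$, while (\ref{F20a}) pairs the same $\mathbf{a}$ against a mass supported only on $S$.

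I would then use (\ref{Okt5ba}) to reduce everything to the surface. Let $V$ denote the even, $\Lambda$-periodic $H^1$-functions vanishing on $B$, and let $\hat H^1(\Omega)\subset V$ be those additionally vanishing on $S$. Since $\mathbf{a}$ is positive definite on $\hat H^1(\Omega)$, each surface trace $\phi$ admits a unique $\mathbf{a}$-harmonic extension $E\phi\in V$, i.e. the minimizer of $\mathbf{a}(u,u)$ subject to $u|_S=\phi$, characterized by $A(E\phi)=0$ in $\Omega$ and by $\mathbf{a}(E\phi,w)=0$ for all $w\in\hat H^1(\Omega)$. This produces the $\mathbf{a}$-orthogonal splitting $V=\mathcal E\oplus\hat H^1(\Omega)$, $\mathcal E=\{E\phi\}$, on which $\mathbf{a}$ is block diagonal: positive definite on $\hat H^1(\Omega)$ and equal to the reduced surface form $q(\phi)=\mathbf{a}(E\phi,E\phi)$ on $\mathcal E$. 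Consequently the Morse index of $\mathbf{a}$ on $V$ coincides with $n_-(q)$, and it is finite because $\mathbf{a}$ differs from the Dirichlet form on the bounded cell $\Omega$ by lower-order terms and hence has discrete spectrum bounded below.

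The conclusion then follows from a min--max comparison. For (\ref{F20aa}) the mass is positive definite on all of $V$, so by Courant--Fischer the number of negative eigenvalues equals the Morse index of $\mathbf{a}$ on $V$, that is $n_-(q)$. For (\ref{F20a}), any eigenfunction with $\theta\ne0$ satisfies $Au=0$ and so lies in $\mathcal E$; testing against $w\in\hat H^1(\Omega)$ annihilates the surface mass, so the eigenrelation reduces on $\mathcal E$ to $q(\phi)=\theta\,\mathbf{m}_S(\phi)$ with $\mathbf{m}_S$ the surface mass form. Provided $\mathbf{m}_S$ is sign-definite on traces, a Sylvester-type inertia count matches the number of negative $\theta$ with $n_-(q)$, giving the asserted equality.

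I expect the main obstacle to be exactly this inertia bookkeeping for (\ref{F20a}). Because its mass form is supported on $S$ alone, it is degenerate on $V$ and the standard positive-mass min--max does not apply directly; one must first confine the problem to $\mathcal E$ using coercivity on $\hat H^1(\Omega)$, and then verify that the reduced surface mass $b\,dx/\psi_y$ has the definiteness which forces the negative-eigenvalue count to be $n_-(q)$ rather than its complementary index --- this is where the sign of $\psi_y$ on $S$, fixed by the boundary datum $\psi=0$ on $S$ and the Bernoulli relation (\ref{Ma13bc}), enters decisively. Once this is settled, the $M\Lambda$-periodic statement follows verbatim: the form $\mathbf{a}$, the splitting $V=\mathcal E\oplus\hat H^1(\Omega)$ and the reduction to $q$ are identical on the $M$-fold period cell, and (\ref{Okt5ba}) persists there since every $M\Lambda$-periodic competitor is admissible for the form.
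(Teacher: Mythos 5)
Your proposal is, in substance, the paper's own argument: your $\mathbf{a}$-harmonic extension and the splitting $V=\mathcal{E}\oplus\hat H^1(\Omega)$ are exactly the paper's subspace $\mathcal{Y}=\{u\in H^1(\Omega):\,Au=0,\ u(x,-d)=0,\ u \text{ periodic}\}$ and its decomposition $w=\tilde w+\hat w$ by matching traces on $S$; the block identity $\mathbf{a}(w,w)=\mathbf{a}(\tilde w,\tilde w)+\mathbf{a}(\hat w,\hat w)$ combined with positivity of $\mathbf{a}$ on $\hat H^1(\Omega)$ from (\ref{Okt5ba}) is the paper's central step; and both arguments finish by identifying the two negative-eigenvalue counts with Morse indices of $\mathbf{a}$ (on the full space for (\ref{F20aa}), on the $A$-harmonic subspace for (\ref{F20a})).

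What you flag as the main obstacle --- the inertia count for (\ref{F20a}) against the degenerate surface mass --- is precisely the step the paper does not prove either: it is asserted in one sentence (``the same property is true for the subspace $Y$''). Your caution is substantive, and the sign issue you point to is real: since $\psi=0$ on $S$ and $\psi=-p_0>0$ below, one has $\psi_y<0$ on $S$, so the mass form $\int_S b\,u\bar v\,dx/\psi_y$ in the weak formulation $\mathbf{a}(u,v)=\theta\int_S b\,u\bar v\,dx/\psi_y$ is \emph{negative} definite; a literal Sylvester count would then equate the number of negative $\theta$ with $n_+(q)$ (which is infinite for this Dirichlet-to-Neumann-type form), not with $n_-(q)$. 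The stated equality therefore requires orienting the spectral parameter by the positive pairing $dx/|\psi_y|$ --- equivalently, reading the boundary condition as $Bu+\theta bu=0$, or absorbing the positive factor $-1/\psi_y$ into $b$ --- and this is the normalization the paper tacitly uses (consistent with how (\ref{Se28bf}) arises from the flattening map, where the surface perturbation is $\zeta=-v/\psi_y$). One smaller point: your justification of (\ref{Okt5ba}) on the $M$-fold cell (``every $M\Lambda$-periodic competitor is admissible'') is not literally right, since such functions need not be $\Lambda$-periodic; positivity does transfer, e.g.\ by Floquet-decomposing an $M\Lambda$-periodic function into pieces $e^{i\tau_m x}v_m$ with $v_m$ $\Lambda$-periodic and using the diamagnetic inequality $|\nabla|v_m||\le|(\nabla+i\tau_m e_1)v_m|$ to reduce each piece to (\ref{Okt5ba}) --- a gap which the paper's one-line treatment of the $M\Lambda$ case shares.
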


The problem (\ref{Se28be}), considered for even, $\Lambda$--periodic functions,
 is a spectral problem for self-adjoint
 operator and its spectrum
consists of isolated eigenvalues of finite multiplicity, which are uniformly bounded
from below. We numerate the eigenvalues in non-decreasing order
\begin{equation}\label{Okt11a}
\mu_1\leq\mu_2\leq\dots,
\end{equation}
taking into account the multiplicities of the eigenvalues.

 It was proved in Lemma
\ref{AL1} that the first eigenvalue $\mu_1$ is always negative.
The main our assumption is
\begin{equation}\label{Mabb}
\mbox{the second eigenvalue $\mu_2$ is positive}.
\end{equation}
By Proposition \ref{Pm23} both spectral problems (\ref{Se28be}) and (\ref{Se28bf})
has one negative eigenvalue and the second eigenvalue is positive. Therefore we choose
the spectral problem (\ref{Se28be}) as the basic auxiliary problem for investigating spectrum of
(\ref{Se28bd}). We put
\begin{equation}\label{Okt23a}
\tau_*=\frac{2\pi}{\Lambda}.
\end{equation}
Our main concern is the spectral problem (\ref{Se28bd}).
The main result about the spectral problem (\ref{Se28bd}) is the following

\begin{theorem}\label{TOkt13a} Let (\ref{Mabb}) be valid and $\hat{\psi}_x\neq 0$ in
\begin{equation}\label{Okt27a}
\Omega_+=\{(x,y)\,:\,x\in (0,\Lambda/2),\;y\in (-d,\hat{\xi}(x))\}.
\end{equation}
Then the set $z=i\tau$, $\tau\in [0,\tau_*)$, contains the
only eigenvalue $z=0$ of the problem (\ref{Se28bd}) which is simple with the odd
eigenfunction $\hat{\psi}_x$.
\end{theorem}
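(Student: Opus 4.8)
The plan is to prove the two parts of the statement separately: that $z=0$ is a simple eigenvalue with eigenfunction $\hat\psi_x$, and that no other $z=i\tau$ with $\tau\in(0,\tau_*)$ is an eigenvalue. The second part I would reduce to a Picone-type positivity identity built from the translation mode $\hat\psi_x$.

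First I would record that $z=0$ is an eigenvalue. Differentiating the interior equation (\ref{Okt5aa}) and the boundary relations (\ref{Ma13bb})--(\ref{Ma13bc}) in $x$ shows that $\hat\psi_x$ solves $A\hat\psi_x=0$ in $\Omega$, $B\hat\psi_x=0$ on $S$, and $\hat\psi_x=0$ on $y=-d$, i.e. (\ref{Se28bd}) with $z=0$; since $\hat\psi$ is even, $\hat\psi_x$ is odd. For simplicity I would split the periodic problem (\ref{Se28be}) with $\mu=0$ into its even and odd sectors. On the even sector there is no kernel, since by (\ref{Mabb}) one has $\mu_1<0<\mu_2$. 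On the odd sector the problem is a Dirichlet problem on $\Omega_+$ (odd periodic functions vanish on $x=0$ and $x=\Lambda/2$), and the hypothesis $\hat\psi_x\neq0$ in $\Omega_+$ says $\hat\psi_x$ has constant sign there, so it is the ground state and $\mu=0$ is the simple lowest odd eigenvalue. Hence the kernel at $z=0$ is one-dimensional, spanned by the odd function $\hat\psi_x$.

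For $\tau\in(0,\tau_*)$ I would read (\ref{Se28bd}) with $z=i\tau$ as the assertion that $0$ is an eigenvalue of the self-adjoint Bloch operator obtained from (\ref{Se28be}) by replacing $\partial_x$ with $\partial_x+i\tau$; equivalently $\mathbf a_\tau(u,u)=0$ for some nonzero $\Lambda$-periodic $u$, where $\mathbf a_\tau$ is the form (\ref{Okt5b}) with $\nabla$ replaced by $(\partial_x+i\tau,\partial_y)$. The heart of the argument is the ground-state substitution $u=\hat\psi_x w$, which is available because $\hat\psi_x\neq0$ in $\Omega_+$. Using $A\hat\psi_x=0$ and $B\hat\psi_x=0$ one is led to the Picone-type identity $\mathbf a_\tau(u,u)=\int_\Omega \hat\psi_x^2\,|(\partial_x+i\tau,\partial_y)w|^2\,dxdy$: the surface contributions cancel precisely because $\hat\psi_x$ satisfies the same boundary condition $B\hat\psi_x=0$ as $u$, while the bottom and the lateral contributions drop out since $\hat\psi_x$ vanishes on $y=-d$ and on the nodal lines $x=0,\Lambda/2$. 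If $u$ is a zero mode then $\mathbf a_\tau(u,u)=0$, forcing $(\partial_x+i\tau)w=0$ and $\partial_y w=0$, so $w=Ce^{-i\tau x}$ and $u=C\hat\psi_x e^{-i\tau x}$. This is $\Lambda$-periodic only when $\tau\in\tau_*\mathbb{Z}$; hence $u\equiv0$ for every $\tau\in(0,\tau_*)$, which is the desired conclusion.

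The step I expect to be the main obstacle is making this substitution rigorous across the nodal set $\{x=0\}\cup\{x=\Lambda/2\}$ of $\hat\psi_x$, where $w=u/\hat\psi_x$ is singular and the weighted energy $\int_\Omega \hat\psi_x^2\,|(\partial_x+i\tau,\partial_y)w|^2$ need not be finite. One must show that an actual zero mode lies in this weighted space, i.e. vanishes on the nodal lines to the correct order, and this is exactly where (\ref{Mabb}) and the positivity (\ref{Okt5ba}) enter, together with elliptic regularity, to exclude zero modes that fail to vanish there. Organizing the global picture through the lowest two Bloch bands---the first remaining negative and the second leaving $0$ as $\tau$ moves away from the origin, with the symmetry $\tau\mapsto\tau_*-\tau$ halving the range---clarifies why $z=0$ is isolated, but the delicate technical point is the behaviour of the substitution near the nodal lines.
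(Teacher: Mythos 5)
Your treatment of $z=0$ is correct and is in substance the paper's own argument: the even sector of the $\Lambda$-periodic kernel is empty by (\ref{Mabb}), and the odd sector is the Dirichlet problem on the half cell $\Omega_+$, where $\hat{\psi}_x\neq 0$ makes $\hat{\psi}_x$ a sign-definite zero mode, hence the simple ground state. This is exactly the content of Lemma \ref{AL2} ($\mu_{2D}=\mu_{1DD}=0$, simple, with odd eigenfunction $\hat{\psi}_x$).

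The main step, however, has a genuine gap, and it is not the removable technicality you describe but a structural failure of the method. Writing ${\bf a}_\tau$ for the form (\ref{Okt5b}) with $\nabla$ replaced by $(\partial_x+i\tau,\partial_y)$, your identity
\[
{\bf a}_\tau(u,u)=\int_\Omega \hat{\psi}_x^2\,\bigl|(\partial_x+i\tau,\partial_y)(u/\hat{\psi}_x)\bigr|^2\,dxdy
\]
can hold only for $u$ vanishing to first order on the nodal set $\{x=0\}\cup\{x=\pm\Lambda/2\}$ of $\hat{\psi}_x$; for any other $u$ the right-hand side diverges, because the integration by parts behind the identity leaves nonzero contributions along the nodal lines. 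The identity cannot extend to the whole Bloch space: let $v_{1D}>0$ be the first eigenfunction of the Dirichlet problem (\ref{Ma23aa}), (\ref{Ma23ac}); then $u=e^{-i\tau x}v_{1D}$ is an admissible $\Lambda$-periodic test function for ${\bf a}_\tau$ for every $\tau$, and ${\bf a}_\tau(u,u)=\mu_{1D}\|v_{1D}\|^2_{L^2(\Omega)}<0$ by Lemma \ref{AL2}, whereas your right-hand side is nonnegative (in fact infinite, since $v_{1D}(0,y)\not\equiv 0$). Equivalently, $\widehat{\mu}_1(\tau)\leq\mu_{1D}<0$ in the notation of Sect.~\ref{SOkt22a}, so ${\bf a}_\tau$ is not nonnegative on the Bloch space, which an unrestricted Picone identity would force. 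Consequently your argument excludes only zero modes that vanish on the nodal lines, and a hypothetical zero mode at quasimomentum $\tau\in(0,\tau_*)$ has no reason to do so: it is merely $L^2$-orthogonal to the lower band eigenfunctions, and neither (\ref{Mabb}) nor (\ref{Okt5ba}) nor elliptic regularity produces such vanishing -- you name these ingredients but give no mechanism, and I do not see one. What your substitution genuinely proves is nonnegativity of the form on functions vanishing at $x=0,\pm\Lambda/2$, i.e. $\mu_{1DD}=0$; that is only the upper-bound half of what is needed. The paper closes the argument by a different device: strict two-sided Dirichlet--Neumann bracketing $\mu_{jN}<\widehat{\mu}_j(\tau)<\mu_{jD}$ for $\tau\in(0,\tau_*)$ (strictness via unique continuation: equality would give an eigenfunction with vanishing Cauchy data on $x=\pm\Lambda/2$), which together with Lemma \ref{AL2} and (\ref{Mabb}) yields $\widehat{\mu}_2(\tau)<\mu_{2D}=0$ and $\widehat{\mu}_3(\tau)>\mu_{3N}=\mu_2>0$, so $\mu=0$ is never a Bloch eigenvalue for $\tau\in(0,\tau_*)$. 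To repair your proof you must replace the global substitution by such a half-cell bracketing argument; the Picone computation can survive only as an alternative derivation of $\mu_{1DD}=0$.
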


The proof is given in Sect. \ref{SOkt22a}. The next important corollary is proved
in Sect. \ref{CorOkt22a}.

\begin{corollary}\label{CorOkt22} Consider the spectral problem in the space of 
$m\Lambda$--periodic functions, where
$m$ is an odd integer. Then there are no $m\Lambda$ periodic, even solutions in a small
neighborhood of $(\hat{\psi},\hat{\xi})$.

\end{corollary}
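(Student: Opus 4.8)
The plan is to deduce the nonlinear statement from Theorem \ref{TOkt13a} by showing that the Frechet derivative has trivial kernel in the class of even $m\Lambda$-periodic functions, and then invoking the implicit function theorem. First I would reduce the free-boundary problem (\ref{Ma13ba})--(\ref{Ma13bc}) to a nonlinear operator equation on a fixed domain by the flattening change of variables described in Sect. \ref{SOkt13ab}, posed in the even $m\Lambda$-periodic H\"older spaces $C^{2,\alpha}_{0,e,\mathrm{per}}$. At the point $(\hat\psi,\hat\xi)$ the linearization of this operator is precisely $(A,B)$ from (\ref{Se28b}) supplemented by (\ref{Se28ad}); equivalently, its kernel is the space of even $m\Lambda$-periodic solutions of the homogeneous problem (\ref{Okt26a}). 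Local uniqueness then follows provided this linearization is an isomorphism on the even $m\Lambda$-periodic spaces, so the heart of the matter is to prove that its kernel is $\{0\}$.

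For the kernel computation I would use the Floquet--Bloch decomposition that is already built into (\ref{Se28bd}). Any $m\Lambda$-periodic solution $U$ of (\ref{Okt26a}) splits as $U=\sum_{k=0}^{m-1}e^{ik\tau_* x/m}v_k(x,y)$ with $\Lambda$-periodic $v_k$, and, since the operator has $\Lambda$-periodic coefficients and is block-diagonal across the $m$ Bloch sectors, $U$ solves (\ref{Okt26a}) if and only if each pair $(z,v_k)=(ik\tau_*/m,\,v_k)$ solves the spectral problem (\ref{Se28bd}). The quasimomenta $k\tau_*/m$ for $k=0,\dots,m-1$ all lie in $[0,\tau_*)$, so Theorem \ref{TOkt13a} applies to every component: it forces $v_k=0$ for $k\neq0$ and $v_0=c\,\hat\psi_x$ for the single admissible eigenvalue $z=0$. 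Imposing that $U$ be even, one checks that the zero-quasimomentum component $v_0$ must itself be even in $x$; but $\hat\psi_x$ is odd, so $c=0$ and hence $U\equiv0$. Thus the linearization has no nontrivial even $m\Lambda$-periodic kernel.

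With injectivity in hand, I would close the argument by upgrading it to invertibility. The operator $(A,B)$ is an elliptic boundary value problem of Fredholm type, and the symmetric bilinear form $\mathbf a(\cdot,\cdot)$ of (\ref{Okt5b}) exhibits it as (essentially) self-adjoint, so its Fredholm index is zero; a trivial kernel therefore gives a bijection between the corresponding even $m\Lambda$-periodic spaces. The implicit function theorem now yields a neighborhood of $(\hat\psi,\hat\xi)$ containing no other even $m\Lambda$-periodic solution, which is exactly Corollary \ref{CorOkt22}. Alternatively, one can argue by contradiction: a sequence of distinct even $m\Lambda$-periodic solutions converging to $(\hat\psi,\hat\xi)$ would, after normalizing the differences and passing to the limit by elliptic compactness, produce a nonzero even $m\Lambda$-periodic element of the kernel, contradicting the previous paragraph.

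I expect the principal difficulty to be the nonlinear-to-linear reduction rather than the spectral input: one must carry out the flattening carefully so that the linearization is identified with (\ref{Se28b}) in the right even periodic spaces, and establish the Fredholm/index-zero property so that injectivity of the derivative genuinely upgrades to local invertibility. Verifying that the $m\Lambda$-periodic kernel is exactly the direct sum of the Bloch-component kernels at the quasimomenta $k\tau_*/m$, and that evenness annihilates the surviving $k=0$ mode $\hat\psi_x$, is the remaining point that must be made rigorous; once this is done, Theorem \ref{TOkt13a} supplies all the spectral information needed.
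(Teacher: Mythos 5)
Your proposal is correct and is essentially the paper's own argument: the paper implements exactly the Floquet--Bloch decomposition you describe via its operators ${\mathcal M}$ and ${\mathcal N}$, splitting an even $m\Lambda$-periodic kernel element into $\Lambda$-periodic components with quasimomenta in $[0,\tau_*)$, applying Theorem \ref{TOkt13a} to annihilate every nonzero-quasimomentum component, and using evenness (its relation (\ref{Okt24aa})) together with the oddness of $\hat{\psi}_x$ to kill the zero mode. The paper then likewise concludes invertibility of $(A,B)$ on the even $m\Lambda$-periodic H\"older spaces and finishes with the same small-perturbation (implicit function theorem) step that you propose.
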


\section{Proofs}

In Sect. \ref{SOkt13a}, \ref{SOkt13aa} and \ref{SOkt13b} we prove an equivalence
of the formal Frechet derivative, constructed in Sect. \ref{SOkt13ab}, with
the real Frechet derivatives, which are calculated after changes of variables. We
consider two changes of variables: the partial hodograph transformation, which can be
 applied when
$\psi_y<0$ in $\overline{D}$, and a flattening  the domain applying in the case 
when $\psi_y\neq 0$
 on $S$. It appears that corresponding spectral problems has a spectral parameter in
 different places, the first in the equation of the problem and the second in the boundary
 condition. In Sect. \ref{SOkt10b} we show that these problems have the same number
 of negative eigenvalues. The proof of Theorem \ref{TOkt13a}   is based on the estimates
 of eigenvalues of spectral problems for the Frechet derivatives obtained in Sect.
 \ref{SOkt25a}. Proofs of Theorem \ref{TOkt13a} and Corollary \ref{CorOkt22}
 are presented in Sect. \ref{SOkt22a} and  \ref{CorOkt22a}  respectively. Finally, in
 Sect. \ref{SOkt25b} we discuss generalized eigenfunctions corresponding to
 the eigenvalue $z=0$ of the spectral problem (\ref{Se28bd}).

\subsection{Partial hodograph transformation}\label{SOkt13a}

Here we assume that
\begin{equation}\label{Se28c}
\hat{\psi}_y <0\;\;\mbox{ in $\overline{D}$}
\end{equation}
and we will use the notation $\psi$ instead of $\hat{\psi}$
Let
$$
q=x,\;\;p=-\psi(x,y),\;\;(x,y)\in \overline{D}.
$$
Then
$$
(q,p)\in\overline{Q},\;\;Q=\{q\in\Bbb R,\;p\in (p_0,0)\}.
$$
We put
$$
h(q,p)=d+y.
$$
Then
$$
\psi_y=-\frac{1}{h_p},\;\;\psi_x=\frac{h_q}{h_p}.
$$
Applying this change of variables, we arrive at
\begin{eqnarray}\label{J17aa}
&&F(h):=\Big(\frac{1+h_q^2}{2h_p^2}\Big)_p-\Big(\frac{h_q}{h_p}\Big)_q+g(h-d)\rho_p
-\beta(-p)=0\;\;
\mbox{in $Q$},\nonumber\\
&&G(h):=\frac{1+h_q^2}{2h_p^2}+gh\rho-R=0\;\;\mbox{for $p=0$},\nonumber\\
&&h(q,p_0)=0\;\;q\in\Bbb R.
\end{eqnarray}

From our smoothness assumptions, formulated at the end of Sect. \ref{SOkt11a}, it follows
 that $h$ is an even, $\Lambda$--periodic function from the space
$C^{2,\alpha}_{\textrm{b}}(\overline{Q})$ with
\begin{equation}\label{J25aa}
\delta\leq h_p(q,p)\;\;\mbox{on $\overline{Q}$ and $\rho\in C^{1,\alpha}([p_0,0])$,
$\beta\in C^{0,\alpha}([p_0,0])$},
\end{equation}
where $\delta$ is a positive constant.
The nonlinear operators in (\ref{J17aa}) is continuous in the following spaces
$$
(F,G)\,:\,C^{2,\alpha}_{0,e,\textrm{per}}(\overline{Q})\rightarrow
C^{0,\alpha}_{0,e,\textrm{per}}(\overline{Q})\times
C^{1,\alpha}_{e,\textrm{per}}(\Bbb R).
$$
Here we consider only positive functions $h_p$.

Now the spaces for the operators $F$ and $G$ are fixed and the Frechet derivatives
of the operators in (\ref{J17aa}) are defined in a usual way;
\begin{eqnarray}\label{J25ab}
&&{\mathcal F}w:=\Big(\frac{h_qw_q}{h_p^2}-\frac{(1+h_q^2)w_p}{h_p^3}\Big)_p
-\Big(\frac{w_q}{h_p}-\frac{h_qw_p}{h_p^2}\Big)_q+gw\rho_p\;\;\mbox{in $Q$},\nonumber\\
&&{\mathcal G}w:=\frac{h_qw_q}{h_p^2}-\frac{(1+h_q^2)w_p}{h_p^3}+g\rho w\;\;
\mbox{for $p=0$},\nonumber\\
&&w(q,p_0)=0\;\;q\in\Bbb R,
\end{eqnarray}
where $w\in C^{2,\alpha}_{0,e,\textrm{per}}(\overline{Q})$.

\subsection{The Frechet derivative (\ref{J25ab}) in $(x,y)$ variables}\label{SOkt13aa}

We start from a function $u$ depending on $(x,y)\in\overline{D}$ and put
\begin{equation}\label{J17aa}
w(q,p)=u(q,h(q,p))h_p(q,p).
\end{equation}
To evaluate ${\mathcal F} w$ and ${\mathcal G}w$, we note first that
\begin{equation*}
w_q=u_xh_p+u_yh_qh_p+uh_{qp},\;\;w_p=u_yh_p^2+uh_{pp}.
\end{equation*}
Therefore
\begin{eqnarray}\label{Ok1aa}
&&I_1:=\frac{h_qw_q}{h_p^2}-\frac{(1+h_q^2)w_p}{h_p^3}=
\frac{h_q(u_xh_p+u_yh_qh_p+uh_{qp})}{h_p^2}
-\frac{(1+h_q^2)(u_yh_p^2+uh_{pp})}{h_p^3}\nonumber\\
&&=u_x\frac{h_q}{h_p}-u_y\frac{1}{h_p}+\Big(\frac{1+h_q^2}{2h_p^2}\Big)_pu
=\psi_xu_x+\psi_yu_y-\frac{1}{2\psi_y}(\psi_x^2+\psi_y^2)_y u
\end{eqnarray}
and
\begin{eqnarray*}
&&I_2:=\frac{(u_xh_p+u_yh_qh_p+uh_{qp})}{h_p}-\frac{h_q(u_yh_p^2+uh_{pp})}{h_p^2}=
u_x+u\Big(\frac{h_q}{h_p}\Big)_p\\
&&=u_x+h_p\psi_{xy} u.
\end{eqnarray*}
This implies
\begin{eqnarray*}
&&I_{1p}-I_{2q}=-\frac{1}{\psi_y}\Big(\psi_xu_x+\psi_yu_y-\frac{u}{\psi_y}(\psi_x\psi_{xy}
+\psi_y\psi_{yy})\Big)_y\\
&&-(\partial_x-\frac{\psi_x}{\psi_y}\partial_y)(u_x-\frac{\psi_{xy}}{\psi_y}u).
\end{eqnarray*}
Taking all terms containing $u$, we get
$$
\frac{u}{\psi_y}\Big(\frac{\psi_x\psi_{xy}
+\psi_y\psi_{yy}}{\psi_y}\Big)_y
+u(\partial_x-\frac{\psi_x}{\psi_y}\partial_y)\Big(\frac{\psi_{xy}}{\psi_y}\Big)
=\frac{u}{\psi_y}(\psi_{xx}+\psi_{yy})_y.
$$
Taking terms containing the first derivatives of $u$, we obtain
\begin{eqnarray*}
&&-\frac{1}{\psi_y}\Big(\psi_{xy}u_x+\psi_{yy}u_y-\frac{u_y}{\psi_y}(\psi_x\psi_{xy}
+\psi_y\psi_{yy})\Big)\\
&&+\frac{\psi_{xy}}{\psi_y}(\partial_x-\frac{\psi_x}{\psi_y}\partial_y)u=0.
\end{eqnarray*}
Finally, collecting all terms containing second derivatives of $u$, we get
$$
-\frac{1}{\psi_y}\Big(\psi_xu_{xy}+\psi_yu_{yy}\Big)
-(\partial_x-\frac{\psi_x}{\psi_y}\partial_y)u_x=-u_{xx}-u_{yy}.
$$
Using these calculations, we obtain
\begin{equation}\label{Ok1ab}
I_{1p}-I_{2q}=\frac{u}{\psi_y}(\psi_{xx}+\psi_{yy})_y-u_{xx}-u_{yy}.
\end{equation}

Now we can give a connection between the Frechet derivatives (\ref{Se28b})
and (\ref{J25ab}).
\begin{lemma}\label{CorM1} Let $\psi$ and $\xi$ satisfy  equations (\ref{Ma13ba})
and (\ref{Ma13bc}). Let also $F$ and $g$ continuous, even and
$\Lambda$-periodic functions. If $u=u(x,y)$ satisfies
 the problem
 \begin{eqnarray}\label{Ok1a}
 &&A u=f\nonumber\\
 &&B u=g
 \end{eqnarray}
 where the operators $A$ and $B$ are defined by (\ref{Se28b}),
then the function $w$ given by (\ref{J17aa}) satisfies the equations
\begin{eqnarray}\label{J18ba}
&&{\mathcal F}w=-f(q,h),\nonumber\\
&&{\mathcal G}w=g\;\;\mbox{for $p=0$},\nonumber\\
&&w=0\;\;\mbox{for $p=p_0$}.
\end{eqnarray}
\end{lemma}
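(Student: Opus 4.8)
The plan is to read off $\mathcal{F}w$ and $\mathcal{G}w$ from the definitions in (\ref{J25ab}) using the auxiliary quantities $I_1$ and $I_2$ already computed. First I would note that, by inspection of (\ref{J25ab}), the bracket $\frac{h_q w_q}{h_p^2} - \frac{(1+h_q^2)w_p}{h_p^3}$ is exactly $I_1$ and the bracket $\frac{w_q}{h_p} - \frac{h_q w_p}{h_p^2}$ is exactly $I_2$, so that $\mathcal{F}w = I_{1p} - I_{2q} + gw\rho_p$ and $\mathcal{G}w = I_1 + g\rho w$ on $p=0$. Hence the heavy lifting is already contained in the identity (\ref{Ok1ab}), $I_{1p} - I_{2q} = \frac{u}{\psi_y}(\psi_{xx}+\psi_{yy})_y - u_{xx} - u_{yy}$. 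Throughout I would use $h_p = -1/\psi_y$ and $w = u h_p = -u/\psi_y$, together with $\rho_p = \rho'(-\psi)$, which holds because in the $(q,p)$ variables $\rho$ depends only on $p = -\psi$.

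The decisive step is to remove the term $(\psi_{xx}+\psi_{yy})_y$ by invoking the nonlinear equation (\ref{Okt5aa}) satisfied by $\psi$. From (\ref{Okt5a}) and (\ref{Okt5aa}) one has $\psi_{xx}+\psi_{yy} = gy\rho'(-\psi) + \beta(\psi)$; differentiating in $y$ and using (\ref{Okt6b}) yields $(\psi_{xx}+\psi_{yy})_y = g\rho'(-\psi) - \psi_y\omega_*$. Substituting this into (\ref{Ok1ab}) and recalling $w = -u/\psi_y$, the contribution $\frac{u}{\psi_y}\,g\rho'(-\psi) = -gw\rho'(-\psi)$ cancels the density term $gw\rho_p = gw\rho'(-\psi)$ present in $\mathcal{F}w$. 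What survives is $\mathcal{F}w = -u\omega_* - (u_{xx}+u_{yy}) = -(\Delta u + \omega_* u) = -Au = -f(q,h)$, the first asserted identity.

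For the surface identity I would evaluate $\mathcal{G}w = I_1 + g\rho w$ on $p = 0$, which by (\ref{Ma13bb}) is the free surface $\psi = 0$, $y = \xi(x)$. By (\ref{Ok1aa}), $I_1 = \nabla\psi\cdot\nabla u - \frac{1}{2\psi_y}(\psi_x^2+\psi_y^2)_y u$; since $\frac{1}{2}(\psi_x^2+\psi_y^2)_y = \nabla\psi\cdot\partial_y\nabla\psi$ and $\rho = \rho(0)$ where $\psi = 0$, inserting $w = -u/\psi_y$ combines the two zeroth-order contributions into $-\frac{\nabla\psi\cdot\partial_y\nabla\psi + g\rho(0)}{\psi_y}u = -\sigma(x)u$, with $\sigma$ as in (\ref{Se28ba}). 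Thus $\mathcal{G}w = \nabla\psi\cdot\nabla u - \sigma(x)u = Bu = g$. Finally the bottom condition is automatic: $p = p_0$ corresponds to $y = -d$ by (\ref{Ma13bb}), where $u = 0$ by (\ref{Se28ad}), so $w = u h_p = 0$.

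The main obstacle is the algebraic cancellation in the second paragraph: one must see that the $g\rho'(-\psi)$ arising from differentiating the governing equation in $y$ is exactly what annihilates the density term $gw\rho_p$ in $\mathcal{F}$, after which the remaining first- and second-order terms reassemble into $-Au$. This cancellation, rather than the rather mechanical remaining bookkeeping, is the real content of the lemma and explains why the Frechet derivative in hodograph variables reproduces $A$ up to sign.
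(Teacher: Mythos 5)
Your proposal is correct and follows essentially the same route as the paper: both rest on the identity (\ref{Ok1ab}) for $I_{1p}-I_{2q}$, differentiate the governing equation (\ref{Okt5aa}) in $y$ so that the resulting $g\rho'(-\psi)$ term cancels the density term $gw\rho_p$ (using $w=-u/\psi_y$), and conclude $\mathcal{F}w=-(\Delta u+\omega_*u)=-f$, with the surface identity $\mathcal{G}w=I_1+g\rho w=Bu=g$ and the bottom condition handled identically. Your write-up in fact states the two key intermediate formulas more cleanly than the paper, whose displayed versions contain minor sign/factor typos that your computation implicitly corrects.
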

\begin{proof} Using (\ref{Ok1ab}), we get
\begin{equation}\label{Ok1b}
{\mathcal F}w=\frac{u}{\psi_y}(\psi_{xx}+\psi_{yy})_y-u_{xx}-u_{yy}-gw\rho_p
\end{equation}
Differentiating (\ref{Okt5aa}) with respect to $y$, we obtain
$$
\Delta\psi_y-g\rho'(-\psi)+g\rho^{''}(-\psi)\psi_y-\beta' (\psi)\psi_y=0,
$$
which allows to rewrite (\ref{Ok1b}) as
$$
{\mathcal F}w=\frac{u}{\psi_y}(g\rho'(-\psi)-gy\rho^{''}(-\psi)\psi_y
+\beta' (\psi)\psi_y)-u_{xx}-u_{yy}+gw\rho_p.
$$
Thus
$$
{\mathcal F}w=u(-gy\rho^{''}(-\psi)+\beta' (\psi))-u_{xx}-u_{yy}.
$$
This proves the first relation in (\ref{J18ba}).

Furthermore,
$$
{\mathcal G}w=I_1+g\rho w=\nabla\psi\cdot\nabla u-\sigma u=g.
$$
This proves the second relation in (\ref{J18ba}).
\end{proof}

\begin{remark}
This lemma implies: if
$$
f=-\mu u(q,h)\;\;\mbox{and}\;\; g=0\;\;\mbox{then}\;\;
$$
in (\ref{Ok1a}), then
\begin{eqnarray*}
&&{\mathcal F}w=\mu \frac{w}{h_p},\nonumber\\
&&{\mathcal G}w=0\;\;\mbox{for $p=0$}.\nonumber\\
\end{eqnarray*}
Thus we obtain a connection between spectral problems for the Frechet derivatives
 in $(x,y)$ and $(q,p)$ variables.
\end{remark}

\subsection{Flattening the boundary in the second case}\label{SOkt13b}

In the case $\psi_y\neq 0$ only on $S$ we cannot apply the partial hodograph
change of variables.
In this case we  use the following flattening change of variables
$$
\hat{x}=x,\;\;\;\hat{y}=\frac{d(y+d)}{\xi(x)+d},
$$
to reduce the problem to a fix strip
$$
Q=\{(\hat{x},\hat{y})\,:\,\hat{x}\in \Bbb R,\;\;0<\hat{y}<d\}.
$$
Since
$$
\partial_x=\partial_{\hat{x}}-\frac{\hat{y}\xi'}{\xi+d}\partial_{\hat{y}},
\;\;\partial_y=\frac{d}{\xi+d}\partial_{\hat{y}},
$$
where $'$ means $\partial_{\hat{x}}$,
the equations (\ref{Okt5aa}) and  (\ref{Ma13bc}) takes the form
\begin{eqnarray}\label{K2aa}
&&F(\hat{\psi},\xi):=\Big(\Big(\partial_{\hat{x}}-\frac{\hat{y}\xi'}{\xi+d}
\partial_{\hat{y}}\Big)^2+\Big(\frac{d}{\xi+d}\partial_{\hat{y}}\Big)^2\Big)
\hat{\psi}+\hat{\omega}(\hat{y},\hat{\psi})=0\;\;\mbox{in $Q$},\nonumber\\
&&G(\hat{\psi},\xi):=\frac{1}{2}\Big(\Big|\Big(\partial_{\hat{x}}-
\frac{\hat{y}\xi'}{\xi+d}\partial_{\hat{y}}\Big)\hat{\psi}\Big|^2
+\Big|\frac{d}{\xi+d}\partial_{\hat{y}}\hat{\psi}\Big|^2\Big)+g\rho(0)(\xi(x)+d)-R=0
\;\;\mbox{for $\hat{y}=d$},\nonumber\\
&&\hat{\psi}=0\;\;\mbox{for $\hat{y}=d$},\nonumber\\
&&\hat{\psi}=-p_0\;\;\mbox{for $\hat{y}=0$},
\end{eqnarray}
where
$$
\hat{\psi}({\hat{x}},\hat{y})=\psi\Big({\hat{x}},
\frac{\hat{y}(\xi({\hat{x}})+d)}{d}-d\Big)
\;\;\mbox{or}\;\;\psi(x,y)=\hat{\psi}({\hat{x}},\hat{y})
$$
and
$$
\widehat{\omega}(\widehat{y},\widehat{\psi})=\omega(y,\psi(x,y))=\omega\Big(
\frac{\hat{y}(\xi({\hat{x}})+d)}{d}-d,\hat{\psi}(\hat{x},\hat{y})\Big).
$$
Then the problem (\ref{K2aa}) is equivalent to
$$
(F(\hat{\psi},\xi),G(\hat{\psi},\xi))=0,
$$
which is defined on $\Lambda$--periodic, even functions from
$C^{2,\alpha}(Q)\times C^{2,\alpha}(\Bbb R)$  satisfying $\hat{\psi}(\hat{x},0)=-p_0$,
$\hat{\psi}(\hat{x},d)=0$ and $\xi+d>0$.

We calculate the Frechet derivative at $(\hat{\psi},\xi)$:
\begin{eqnarray}\label{Ju28a}
&&{\mathcal F}(u,\zeta):=\partial_tF(\hat{\psi}+tu,\xi+t\zeta)|_{t=0}
=\Big(\Big(\partial_{\hat{x}}-\frac{\hat{y}\xi'}{\xi+d}
\partial_{\hat{y}}\Big)^2+\Big(\frac{d}{\xi+d}\partial_{\hat{y}}\Big)^2\Big)
u+\tilde{\omega}\nonumber\\
&&-\Big(\frac{\zeta}{\xi+d}\Big)'\hat{y}\partial_{\hat{y}}\Big(\partial_{\hat{x}}
-\frac{\xi'}{\xi+d}{\hat{y}}
\partial_{\hat{y}}\Big)\hat{\psi}
-\Big(\partial_{\hat{x}}-\frac{\hat{y}\xi'}{\xi+d}\partial_{\hat{y}}\Big)
\Big(\frac{\zeta}{\xi+d}\Big)'\hat{y}\partial_{\hat{y}}\hat{\psi}\nonumber\\
&&-2\frac{d^2\zeta}{(\xi+d)^3}\partial_{\hat{y}}^2\hat{\psi},
\end{eqnarray}
where
$$
\tilde{\omega}=\omega_\psi(y,\psi)u+(\omega_y+\omega_\psi\psi_y)\hat{y}\frac{\zeta}{d},
$$
and
\begin{eqnarray}\label{Ju28aa}
&&{\mathcal G}(u,\zeta):=\partial_tG(\hat{\psi}+tu,\xi+t\zeta)|_{t=0}=
\Big(\partial_{\hat{x}}\hat{\psi}-\frac{\hat{y}\xi'}{\xi+d}\partial_{\hat{y}}
\hat{\psi}\Big)\Big(\partial_{\hat{x}}u
-\frac{\hat{y}\xi'}{\xi+d}\partial_{\hat{y}}u\Big)\nonumber\\
&&+\frac{d^2}{(\xi+d)^2}\partial_{\hat{y}}\hat{\psi}\partial_{\hat{y}}u+g\rho(0)\zeta
-\Big(\partial_{\hat{x}}\hat{\psi}-\frac{\hat{y}\xi'}{\xi+d}\partial_{\hat{y}}
\hat{\psi}\Big)\Big(\frac{\zeta}{\xi+d}\Big)'\hat{y}\partial_{\hat{y}}\hat{\psi}
\nonumber\\
&&\!\!-\!\frac{d^2\zeta}{(\xi+d)^3}\partial_{\hat{y}}\hat{\psi}=\psi_xu_x+\psi_yu_y
+g\rho(0)\zeta-\psi_x\psi_y\Big(\frac{\zeta}{\xi+d}\Big)_x(\xi+d)\!\!
-\!\psi_y^2\frac{\zeta}{\xi+d}.
\end{eqnarray}
Here $u=0$ for $\hat{y}=0$ and $\hat{y}=d$.


Let us introduce the transformation
\begin{equation}\label{Au2a}
v(x,y)=u(\hat{x},\hat{y})-\psi_{y}(x,y)\frac{(y+d)\zeta}{\xi+d}.
\end{equation}

\begin{lemma} (i) Assume that the functions $\psi$ and $\xi$ satisfy (\ref{Okt5aa})
in the domain $D_\xi$. If the function  $v$ is given
by (\ref{Au2a}) then
$$
(\partial_x^2+\partial_y^2)v+\omega_*v={\mathcal F}(u,\zeta),
$$
where $\omega_*$ is defined by (\ref{Okt6b}).

(ii) Furthermore
$$
\psi_xv_x+\psi_yv_y+\hat{\sigma}\zeta={\mathcal G}(u,\zeta)\;\;
\mbox{on ${\mathcal S}_\xi$},
$$
where
$$
\hat{\sigma}=\psi_x\psi_{xy}+\psi_y\psi_{yy}+g\rho(0).
$$

\end{lemma}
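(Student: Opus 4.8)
The plan is to identify the transformation (\ref{Au2a}) as the variation of the \emph{physical} stream function. Let $\psi^{(t)}$ denote the stream function in $D_{\xi+t\zeta}$ corresponding to the flattened pair $(\hat\psi+tu,\xi+t\zeta)$, that is $\psi^{(t)}(x,y)=(\hat\psi+tu)(\hat x,\hat y^{(t)})$ with $\hat y^{(t)}=d(y+d)/(\xi+t\zeta+d)$. Differentiating at $t=0$, with $\partial_t\hat y^{(t)}|_{t=0}=-\hat y\zeta/(\xi+d)$ and $\hat\psi_{\hat y}=\psi_y(\xi+d)/d$, yields $\partial_t\psi^{(t)}|_{t=0}=u-\psi_y(y+d)\zeta/(\xi+d)=v$. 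Thus $v$ is exactly the variation of $\psi$ at a fixed point of the physical domain induced by $(u,\zeta)$. Since $F$ and $G$ are the pullbacks under the $\xi$-dependent flattening of the physical operators $\Delta\psi+\omega(y,\psi)$ and of the Bernoulli expression, the chain rule forces ${\mathcal F}(u,\zeta)$ and ${\mathcal G}(u,\zeta)$ to agree with the physical linearisations $\Delta v+\omega_*v$ and $\psi_xv_x+\psi_yv_y+\hat\sigma\zeta$ evaluated at $v$. The proof consists in making this identification explicit.

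For (i) I write $u^\flat(x,y)=u(\hat x,\hat y)$; since the flattened Laplacian of (\ref{Ju28a}) is the pullback of $\partial_x^2+\partial_y^2$, the term $(\partial_x^2+\partial_y^2)u^\flat$ reproduces the leading operator applied to $u$, and $\omega_*u^\flat$ reproduces the $\omega_\psi u$ part of $\tilde\omega$ (recall $\omega_\psi=\omega_*$). Setting $\chi=(y+d)\zeta/(\xi+d)$, the remaining contribution is $-\Delta(\psi_y\chi)-\omega_*\psi_y\chi$. I expand by Leibniz, $\Delta(\psi_y\chi)=\chi\Delta\psi_y+2\nabla\psi_y\cdot\nabla\chi+\psi_y\Delta\chi$, and substitute the identity obtained by differentiating (\ref{Okt5aa}) in $y$, namely $\Delta\psi_y=-\omega_y-\omega_*\psi_y$; this is the sole place the hypothesis that $(\psi,\xi)$ solves (\ref{Okt5aa}) enters. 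The $\omega_*\psi_y\chi$ produced by $\chi\Delta\psi_y$ cancels the term $-\omega_*\psi_y\chi$ above, leaving $\omega_y\chi$, which is the $\zeta$-part of $\tilde\omega$. Thus the claim reduces to the purely differential identity that the three $\zeta$-terms of (\ref{Ju28a}) assemble into $2\nabla\psi_y\cdot\nabla\chi+\psi_y\Delta\chi$ once every flattened derivative of $\hat\psi$ is re-expressed as a physical derivative of $\psi$.

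Part (ii) is shorter and exploits the already-reduced form of ${\mathcal G}$ in the last line of (\ref{Ju28aa}), where the relations $(\partial_{\hat x}-\hat y\xi'/(\xi+d)\partial_{\hat y})\phi=\phi_x$ and $(d/(\xi+d))\partial_{\hat y}\phi=\phi_y$ have turned the flattened derivatives into physical ones. I compute $\psi_xv_x+\psi_yv_y=\psi_xu_x+\psi_yu_y-\nabla\psi\cdot\nabla(\psi_y\chi)$ and expand $\nabla\psi\cdot\nabla(\psi_y\chi)=(\psi_x\psi_{xy}+\psi_y\psi_{yy})\chi+\psi_x\psi_y\chi_x+\psi_y^2\chi_y$. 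The decisive point is that on $S_\xi$ one has $y=\xi(x)$, whence $\chi=\zeta$, $\chi_y=\zeta/(\xi+d)$ and $\chi_x=(\xi+d)(\zeta/(\xi+d))_x$; the term $(\psi_x\psi_{xy}+\psi_y\psi_{yy})\chi$ is therefore cancelled precisely by $\hat\sigma\zeta-g\rho(0)\zeta=(\psi_x\psi_{xy}+\psi_y\psi_{yy})\zeta$, and the surviving two terms reproduce the last two terms of ${\mathcal G}$. This is exactly the role of the combination defining $\hat\sigma$.

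I expect the main obstacle to be the interior matching in (i): confirming that the three domain-variation terms of ${\mathcal F}(u,\zeta)$, which still carry the second flattened derivatives $\partial_{\hat y}^2\hat\psi$ and the coefficient $(\zeta/(\xi+d))'$, reassemble into $2\nabla\psi_y\cdot\nabla\chi+\psi_y\Delta\chi$ after each $\partial_{\hat x},\partial_{\hat y}$ is expressed through $\partial_x,\partial_y$. This is a long but mechanical substitution; the conceptual argument of the first paragraph both guarantees the result and dictates the target grouping, so the only real risk is clerical error in the chain-rule bookkeeping.
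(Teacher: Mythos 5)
Your proof is correct, and its computational core is the same route the paper takes: for (i), expand $(\partial_x^2+\partial_y^2+\omega_*)(\psi_y\chi)$ by Leibniz, insert the $y$-differentiated field equation $\Delta\psi_y=-\omega_y-\omega_*\psi_y$ (the only place the hypothesis enters), and match the three flattened $\zeta$-terms of (\ref{Ju28a}), rewritten in physical variables, against the quantity $I=2\nabla\psi_y\cdot\nabla\chi+\psi_y\Delta\chi$ that the paper also isolates; for (ii), the identical boundary cancellation through $\hat\sigma$. One clerical point: those three $\zeta$-terms assemble into $-(2\nabla\psi_y\cdot\nabla\chi+\psi_y\Delta\chi)$, i.e.\ into $-I$, not $+I$ as your prose states --- exactly the kind of sign slip you flagged, and it does not affect the structure.

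What you add, and what the paper does not have, is the chain-rule identification of $v$ as $\partial_t\psi^{(t)}|_{t=0}$, and this overlay in fact resolves a real inconsistency. Your bookkeeping requires the $\zeta$-part of $\tilde\omega$ to be $\omega_y\hat{y}\zeta/d=\omega_y\chi$, whereas (\ref{Ju28a}) literally defines it as $(\omega_y+\omega_\psi\psi_y)\hat{y}\zeta/d$; with the paper's displayed formula the identity in (i) fails by exactly $\omega_*\psi_y\chi$. A first-principles differentiation of $\hat\omega$, as in your opening paragraph, gives $\partial_t\,\omega\bigl(\hat{y}(\xi+t\zeta+d)/d-d,\hat\psi+tu\bigr)\big|_{t=0}=\omega_y\hat{y}\zeta/d+\omega_\psi u$, so the extra $\omega_\psi\psi_y$ in the paper's display is spurious and your version is the one consistent with the lemma. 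Correspondingly, the paper's own two-line proof of (i) contains compensating glitches: its first equality tacitly replaces $\omega_*u(\hat{x},\hat{y})$ by $\tilde\omega$ (dropping $\tilde\omega$'s $\zeta$-part), and its final comparison tacitly treats $\chi(\Delta+\omega_*)\psi_y$ as zero, although by the differentiated equation it equals $-\omega_y\chi$, which is nonzero for genuinely stratified flows ($\rho'\not\equiv 0$); the two omissions cancel precisely when $\tilde\omega$'s $\zeta$-part is $\omega_y\chi$. Your write-up makes this cancellation explicit rather than accidental, which is a modest but genuine improvement over the paper's argument.
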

\begin{proof} (i) Using relations (\ref{Ju28a}) and (\ref{Okt5aa}), we get
\begin{eqnarray*}
&&(\partial_x^2+\partial_y^2)v+\omega_*v=\Big(\Big(\partial_{\hat{x}}
-\frac{\hat{y}\xi'}{\xi+d}\partial_{\hat{y}}\Big)^2
+\Big(\frac{d}{\xi+d}\partial_{\hat{y}}\Big)^2 )\Big)
u(\hat{x},\hat{y})+\tilde{\omega}\\
&&-\Big(\partial_x^2+\partial_y^2+\omega_*\Big)\Big(\psi_{y}(x,y)
\frac{(y+d)\zeta}{\xi+d}
\Big)\\
&&=\Big(\Big(\partial_{\hat{x}}-\frac{\hat{y}\xi'}{\xi+d}\partial_{\hat{y}}\Big)^2
+\Big(\frac{d}{\xi+d}\partial_{\hat{y}}\Big)^2 )\Big)u(\hat{x},\hat{y})+\tilde{\omega}
\\
&&-\frac{(y+d)\zeta}{\xi+d}\Big(\partial_x^2+\partial_y^2
+\omega_*\Big)\psi_{y}(x,y)-I,
\end{eqnarray*}
where
\begin{eqnarray*}
&&I=\Big(2(y+d)\psi_{xy}\Big(\frac{\zeta}{\xi+d}\Big)'
+(y+d)\psi_y\Big(\frac{\zeta}{\xi+d}\Big)^{''}\Big)+2\psi_{yy}\frac{\zeta}{\xi+d}.
\end{eqnarray*}
 Comparing this with the second line in (\ref{Ju28a}), we arrive at the assertion (i).

(ii) We have
\begin{eqnarray*}
&&\psi_xv_x+\psi_yv_y+\hat{\sigma}\zeta=\psi_xu_x+\psi_yu_y+\hat{\sigma}\zeta\\
&&-(\psi_x\psi_{xy}+\psi_y\psi_{yy})\zeta-\psi_x\psi_y (y+d)
\Big(\frac{\zeta}{\xi+d}\Big)'-\psi_y^2\frac{\zeta}{\xi+d}\\
&&=\psi_xu_x+\psi_yu_y+g\rho(0)\zeta-\psi_x\psi_y (y+d)\Big(\frac{\zeta}{\xi+d}\Big)'
-\psi_y^2\frac{\zeta}{\xi+d}.
\end{eqnarray*}
This together with (\ref{Ju28aa}) leads to the required proof of (ii).
\end{proof}

\begin{corollary} Let the functions $\psi$ and $\xi$ satisfy the first equation
in (\ref{K2aa}) in the domain $Q$.
Assume that $u$ and $\zeta$ satisfy
\begin{eqnarray}\label{Au2b}
&&{\mathcal F}(u,\zeta)=0\;\;\mbox{in $Q$},\nonumber\\
&&{\mathcal F}(u,\zeta)=\mu b\zeta\;\;\mbox{for $y=d$},\nonumber\\
&&u=0\;\;\mbox{for $y=d$}.
\end{eqnarray}
Then the functions $v$ and $\zeta$ satisfy
\begin{eqnarray}\label{Au2bb}
&& Av=0\;\;\mbox{in $D_\xi$},\nonumber\\
&&B v=\mu b v\;\;\mbox{on $S_\xi$},\nonumber\\
&&v=0\;\;\mbox{for $y=0$}
\end{eqnarray}
and
$$
\zeta=-v/\psi_y \;\;\mbox{on $S_\xi$}.
$$
\end{corollary}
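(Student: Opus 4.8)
The plan is to read the statement off directly from the two parts of the preceding Lemma, using only the explicit form of the spectral conditions in (\ref{Au2b}) together with an elementary algebraic identity relating $\sigma$ and $\hat\sigma$. First I would record the boundary relation between $\zeta$ and $v$. Evaluating the defining transformation (\ref{Au2a}) on the free surface, where $\hat y=d$ forces $u=0$ and $y=\xi(x)$ gives $y+d=\xi+d$, yields $v=-\psi_y\,\zeta$, hence $\zeta=-v/\psi_y$ on $S_\xi$. The same transformation on the bottom, where $\hat y=0$ forces $u=0$ and $y=-d$ gives $y+d=0$, yields $v=0$, which is the Dirichlet condition in (\ref{Au2bb}). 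Since the flattening $(\hat x,\hat y)\mapsto(x,y)$ is a diffeomorphism of $Q$ onto $D_\xi$, the function $v$ defined by (\ref{Au2a}) is a genuine function on $\overline{D_\xi}$, inheriting the regularity of $u$, $\psi_y$ and $\xi$.

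For the interior equation I would invoke part (i) of the Lemma, which asserts $(\partial_x^2+\partial_y^2)v+\omega_* v=\mathcal F(u,\zeta)$ at corresponding points. Because $Av=\Delta v+\omega_* v$ by the definition (\ref{Se28b}), and $\mathcal F(u,\zeta)=0$ throughout $Q$ by the first line of (\ref{Au2b}), this gives $Av=0$ in $D_\xi$ at once.

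The boundary condition is the only place requiring a short computation. By part (ii) of the Lemma, $\psi_x v_x+\psi_y v_y+\hat\sigma\,\zeta=\mathcal G(u,\zeta)$ on $S_\xi$, while by (\ref{Se28b}) one has $Bv=\psi_x v_x+\psi_y v_y-\sigma v$. Comparing the definition (\ref{Se28ba}) of $\sigma$ with $\hat\sigma=\psi_x\psi_{xy}+\psi_y\psi_{yy}+g\rho(0)$ immediately gives the identity $\hat\sigma=\sigma\,\psi_y$. Substituting $\zeta=-v/\psi_y$ then makes the two $\sigma$-terms cancel, since $\hat\sigma\,\zeta=\sigma\psi_y\,\zeta=-\sigma v$, so that $Bv=\mathcal G(u,\zeta)-\hat\sigma\,\zeta-\sigma v=\mathcal G(u,\zeta)$. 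Finally the surface condition in (\ref{Au2b}), namely $\mathcal G(u,\zeta)=\mu b\,\zeta$ on $\hat y=d$, combined once more with $\zeta=-v/\psi_y$, delivers $Bv=\mu b\,\zeta=\mu(-b/\psi_y)\,v$, which is the boundary relation in (\ref{Au2bb}) with the positive weight $-b/\psi_y$ (recall $\psi_y<0$ on $S$ in this case).

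I expect the only genuinely delicate point to be this bookkeeping of weight and sign on $S_\xi$: the essential observations are the identity $\hat\sigma=\sigma\psi_y$ and the cancellation it produces once $\zeta=-v/\psi_y$ is inserted, after which the spectral parameter $\mu$ is preserved and the weight transforms only by the positive factor $-1/\psi_y$ — exactly the phenomenon already seen for the partial hodograph change of variables in the Remark following Lemma \ref{CorM1}, where the weight $1/h_p=-\psi_y$ appeared. No further analytic difficulty arises, since the change of variables is a smooth diffeomorphism and both parts of the Lemma are pointwise identities.
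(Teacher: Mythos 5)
Your proof is correct and is exactly the argument the paper intends: the paper states this corollary without any separate proof, as an immediate consequence of parts (i) and (ii) of the preceding lemma, and you assemble it the same way — reading $\zeta=-v/\psi_y$ on $S_\xi$ and $v=0$ on the bottom directly from (\ref{Au2a}), and converting ${\mathcal F},{\mathcal G}$ into $A,B$ via the identity $\hat\sigma=\sigma\psi_y$. Your bookkeeping on the surface weight is also right — as literally written the paper's (\ref{Au2bb}) should carry the positive weight $-b/\psi_y$ rather than $b$ (and its ``${\mathcal F}(u,\zeta)=\mu b\zeta$'' and ``$v=0$ for $y=0$'' are typos for ${\mathcal G}$ and $y=-d$) — but since Proposition \ref{Pm23} only requires \emph{some} continuous, even, positive, $\Lambda$-periodic weight, this renaming affects nothing downstream.
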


\subsection{Proofs of Proposition \ref{Pm23} and the negativity of $\mu_1$}\label{SOkt10b}
Let us prove Proposition \ref{Pm23}.
\begin{proof} The problems (\ref{F20a}) and
(\ref{F20aa}) present eigenvalue problems for self-adjoint operators and their spectrum
consist of eigenvalues of finite multiplicity with the only accumulation
 point at $\infty$. Introduce spaces
 $$
 {\mathcal X}=\{u\in H^1(\Omega)\,:\,
u(-\Lambda/2,y)=u(\Lambda/2,y)\;\; \mbox{and}\;\; u(x,-d)=0\}
 $$
 and
 $$
 {\mathcal Y}=\{u\in H^1(\Omega)\,:\,
u(-\Lambda/2,y)=u(\Lambda/2,y),\; Au=0\;\; \mbox{and}\;\; u(x,-d)=0\}.
 $$
 Denote by $X$ the finite dimensional subspace in ${\mathcal X}$ generating by
 the eigenfunctions corresponding to
 negative eigenvalues of the problem (\ref{F20aa}). Then $\dim X$ coincides with the number
  of negative eigenvalues counting together with their multiplicities. This number
  also coincides with the dimension of a largest subspace where the form ${\bf a}$
   is negative. The same property is true for the subspace $Y$ consisting
   of eigenfunctions corresponding to the negative eigenvalues of the problem (\ref{F20a}).
   This implies that
   $$
   \dim X\geq \dim Y.
   $$
   To prove the opposite inequality, we proceed as follows. First, we note that
   $$
   {\bf a}(u,v)=0\;\;\mbox{for $u\in {\mathcal Y}$ and $v\in \hat{H}^1(\Omega)$}.
   $$
  For each $w\in {\mathcal X}$ we can find unique $\tilde{w}\in {\mathcal Y}$ such that
  $w=\tilde{w}$ for $y=\xi(x)$. Therefore $\hat{w}:=w-\tilde{w}\in \hat{H}^1(\Omega)$.
Since
$$
{\bf a}(\tilde{w}+\hat{w},\tilde{w}+\hat{w})={\bf a}(\tilde{w},\tilde{w})+
{\bf a}(\hat{w},\hat{w})<0\;\;\mbox{if $w\neq 0$}
$$
and ${\bf a}(\hat{w},\hat{w})\geq 0$ by (\ref{Okt5ba}), we have that
${\bf a}(\tilde{w},\tilde{w})<0$, which prove the opposite inequality.
\end{proof}

Let $\mu_1\leq\mu_2\leq\dots$ be
 the eigenvalues of the spectral problem (\ref{Se28be}) considered for even,
 $\Lambda$--periodic functions, introduced by  (\ref{Okt11a}).

\begin{lemma}\label{AL1} Let $\psi_x\neq 0$ in $\Omega_+$. Then the eigenvalue $\mu_1$
is negative and simple and the corresponding eigenfunction is positive
in $\Omega$.
\end{lemma}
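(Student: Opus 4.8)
The plan is to exhibit $\psi_x$ as a sign-definite null solution of the linearized problem on the half-period $\Omega_+$, to identify the resulting eigenvalue problem there with the even, $\Lambda$-periodic problem (\ref{Se28be}) on $\Omega$, and then to compare Neumann and Dirichlet ground-state energies on $\Omega_+$.

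First I would check that $u=\psi_x$ solves (\ref{Se28be}) with $\mu=0$. Differentiating the interior equation (\ref{Okt5aa}) in $x$ gives $\Delta\psi_x+\omega_*\psi_x=0$, since $\omega_\psi=\omega_*$ by (\ref{Okt5a}) and (\ref{Okt6b}); the boundary relation $\nabla\psi\cdot\nabla\psi_x-\sigma\psi_x=0$ on $S$ follows from translation invariance (differentiate the family $(\psi(\cdot-s,\cdot),\xi(\cdot-s))$ at $s=0$), and $\psi_x(x,-d)=0$ from (\ref{Ma13bb}). Since $\psi$ is even and $\Lambda$-periodic, $\psi_x$ is odd and $\Lambda$-periodic, hence vanishes on the vertical segments $\{x=0\}$ and $\{x=\Lambda/2\}$; by hypothesis $\psi_x\neq0$ in the connected set $\Omega_+$, so $\psi_x$ has a fixed sign there, say $\psi_x>0$ after a choice of sign. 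Thus $\psi_x|_{\Omega_+}$ is a positive solution, with $\mu=0$, of the problem on $\Omega_+$ carrying Dirichlet data on $\{x=0\}\cup\{x=\Lambda/2\}$, the natural condition $Bu=0$ on top, and $u=0$ at the bottom. A positive eigenfunction of a self-adjoint Schr\"odinger-type operator is necessarily its ground state (any higher eigenfunction is orthogonal to it and so must change sign), so the lowest eigenvalue of this Dirichlet-on-verticals problem equals $0$.

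Next I would relate the two problems. An even, $\Lambda$-periodic function is symmetric about both $x=0$ and $x=\Lambda/2$, so its restriction to $\Omega_+$ satisfies homogeneous Neumann conditions on $\{x=0\}\cup\{x=\Lambda/2\}$, and conversely; moreover the Rayleigh quotient of (\ref{Se28be}) in the even, $\Lambda$-periodic class equals that of the Neumann-on-verticals problem on $\Omega_+$. Enlarging the admissible class from Dirichlet to Neumann on the vertical sides can only lower the minimum, and the positive function $\psi_x|_{\Omega_+}$ is a Neumann-admissible trial function realizing the quotient value $0$, so $\mu_1\le0$. For strict inequality I would argue by contradiction: if $\mu_1=0$, then $\psi_x|_{\Omega_+}$ is a Neumann minimizer, hence a Neumann eigenfunction, forcing $\psi_{xx}=0$ on $\{x=0\}$; but $\psi_x>0$ in $\Omega_+$ with $\psi_x=0$ on $\{x=0\}$ gives, by the Hopf boundary-point lemma for $\Delta\psi_x+\omega_*\psi_x=0$, that $\psi_{xx}>0$ on $\{x=0\}$, a contradiction. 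Hence $\mu_1<0$. Simplicity and positivity then follow from the standard ground-state (Perron--Frobenius) argument: if $\phi_1$ minimizes the quotient, so does $|\phi_1|$, since $|\nabla|\phi_1||=|\nabla\phi_1|$ a.e.; thus $|\phi_1|$ is a first eigenfunction, the strong maximum principle (with the Hopf lemma handling the natural condition on $S$) gives $|\phi_1|>0$ in $\Omega$, so $\phi_1$ is of one sign, and two $L^2$-orthogonal one-signed functions cannot coexist, whence the eigenspace is one-dimensional.

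The main obstacle I anticipate is making the strict inequality and the maximum-principle steps rigorous in the presence of the nonstandard, $\sigma$-dependent natural boundary condition on $S$ and at the corners where $\{x=0\},\{x=\Lambda/2\}$ meet $S$ and the bottom: one must verify that the Hopf lemma and strong maximum principle apply up to these boundaries under only the stated $C^{k,\alpha}$ regularity, and that the Dirichlet--Neumann comparison of quotients on $\Omega_+$ is legitimate in the relevant $H^1$ spaces.
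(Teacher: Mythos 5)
Your proof is correct, and its core coincides with the paper's: both arguments hinge on the observation that $\psi_x$ is a null solution of (\ref{Se28be}) (i.e.\ an eigenfunction with $\mu=0$) which is odd, vanishes on the vertical lines $x=0,\pm\Lambda/2$ and on the bottom, and is sign-definite on the half-period domain $\Omega_+$. The difference lies in the execution. The paper is terse: from the vanishing of $v=\psi_x$ on the verticals it asserts $\mu_1\le 0$, and it excludes $\mu_1=0$ by simplicity of the ground state --- if $\mu_1=0$ then $v=cw_1$ with $w_1$ the positive first eigenfunction, impossible for a nonzero odd $v$. That step implicitly requires identifying the even-periodic ground state with the ground state of the full periodic problem, since the odd $v$ does not lie in the even class where $\mu_1$ was defined; your half-period reformulation (even-periodic problem $=$ Neumann-on-verticals problem on $\Omega_+$, with the Dirichlet class contained in the Neumann form domain) removes this parity bookkeeping, and it is in fact exactly the device the paper itself deploys later, in the proof of Lemma \ref{AL2}, where $\mu_1=\mu_{1NN}$ and $\mu_{1DD}=0$. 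Your mechanism for strictness is also different: instead of the simplicity/parity contradiction you note that $\mu_1=0$ would make $\psi_x|_{\Omega_+}$ a Neumann minimizer, forcing $\psi_{xx}=0$ on $\{x=0\}$, which the Hopf boundary-point lemma forbids. The technical caveats you flag are all handled by standard means: $\psi_x$ has zero Dirichlet data on the flat open segment $\{x=0\}$, so boundary Schauder estimates give $C^{2,\alpha}$ regularity up to it, and Hopf applies to the positive solution after absorbing the bad sign of the potential, writing $\Delta\psi_x-\omega_*^-\psi_x=-\omega_*^+\psi_x\le 0$ with $\omega_*^\pm$ the positive and negative parts. So both endings are sound; the paper's is shorter once the even/full-periodic identification is granted, while yours is more self-contained and never compares eigenfunctions across parity classes.
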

\begin{proof} The simplicity of the lowest eigenvalue and positivity (up to the sign)
of the corresponding eigenfunction $w_1$ are quite standard facts.

Let $v=\psi_x$. By differentiating (\ref{Se28be}) with respect to $x$ one can verify that
the function $v$
satisfies the problem (\ref{Ma13ba}), (\ref{Ma13bc}) with $\mu=0$. The function $v$ is
odd and hence
$v(-\Lambda/2,y)=v(\Lambda/2,y)=v(x,-d)=0$ for $y\in (-d,\xi(\Lambda/2))$. This implies
that $\mu_1\leq 0$. In the case $\mu_1=0$
we get that $v=cw_1$, which implies that $w_1=0$. Hence $\mu_1<0$.

\end{proof}
 \begin{remark}
 Let us discuss the assumption $\psi_x\neq 0$ in $\Omega_+$. Usually large amplitude
 solutions are constructed as elements of a continuous (analytic) branch of solutions
 $(\psi,\xi)$ of (\ref{Ma13ba})--(\ref{Ma13bc}) depending on a parameter $t$ and
 starting from a laminar flow. If the laminar flow satisfies $\psi_x\neq 0$ inside
 $\Omega_+$ then the same is true for all elements on the branch. This fact is well-known,
 see for example \cite{CSst}, \cite{CSrVar} \cite{Koz1Loop} and \cite{W2}.

 \end{remark}

\subsection{Estimates of eigenvalues}\label{SOkt25a}

Here we study eigenvalues of the spectral problem (\ref{Se28be})  in the case when
the function $w$ is not necessary  $\Lambda$-periodic and even. These problems
 will be used in the study our main spectral problem (\ref{Se28bd}).

Let the domain $\Omega$ be defined by (\ref{Okt6a}) and $\Omega_+$ by (\ref{Okt27a}).
Introduce two spectral boundary value problems. The first one is described by
\begin{eqnarray}\label{Ma23aa}
&&\Delta u+\omega_* u+\mu u=0\;\;\mbox{in $\Omega$},\nonumber\\
&&\nabla\hat{\psi}\cdot\nabla u-\sigma(x)u=0\;\;\mbox{for $y=\hat{\xi}(x)$},\nonumber\\
&&u(x,-d)=0,
\end{eqnarray}
together with the Neumann boundary condition on the remaining part of the boundary:
\begin{equation}\label{Ma23ab}
\partial_xw(-\Lambda/2,y)=\partial_xw(\Lambda/2,y)=0\;\;
\mbox{for $y\in (-d,\xi(\Lambda/2))$}.
\end{equation}
For a weak formulation of this problem we introduce some spaces
$$
H^1_0(\Omega)\!=\!\{w\in H^1(\Omega)\,:\,w=0\,\mbox{for}\,y=-d\},\;\;
H^1_{00}(\Omega)\!=\!\{w\in H^1_0(\Omega)\,:\,w=0\,\mbox{for}\,x=\pm\Lambda/2\}.
$$
Then the Neumann spectral problem is formulated as finding of $w\in H^1_0(\Omega)$
and $\mu\in\Bbb R$ satisfying
$$
{\bf a}(w,v)=\mu (w,v)_{L^2(\Omega)}\;\;\mbox{for all $v\in H^1_0(\Omega)$}.
$$
In this formulation we do not assume that the function $w$ is even. Certainly  even,
$\Lambda$--periodic
solutions to the spectral problem (\ref{Se28be}) solve just introduced
spectral problem.
The second spectral problem
is described by the equations (\ref{Ma23aa}) together with the Dirichlet
boundary conditions:
\begin{equation}\label{Ma23ac}
w(-\Lambda/2,y)=w(\Lambda/2,y)=0\;\;\mbox{for $y\in (-d,\xi(\Lambda/2))$}.
\end{equation}
The weak formulation of the Dirichlet spectral problem is the following. Find
$w\in H^1_{00}(\Omega)$ and $\mu\in\Bbb R$ solving the equation
$$
{\bf a}(w,v)=\mu (w,v)_{L^2(\Omega)}\;\;\mbox{for all $v\in H^1_{00}(\Omega)$}.
$$
Here we also do not assume that the function is even or odd, but we note that odd,
$\Lambda$ periodic
solutions to the spectral problem (\ref{Se28be}) deliver solutions to
this problem.
 We denote by $\{\mu_{jN}\}$ and $\{\mu_{jD}\}$, $j=1,\dots$, the eigenvalues for
the Neumann and Dirichlet spectral problems respectively. As usual the numeration
takes into account the multiplicity and the increasing order:
$$
\mu_{1N}\leq \mu_{2N}\leq\cdots,\;\;\mu_{1D}\leq \mu_{2D}\leq\cdots.
$$
Clearly,
$$
\mu_{jN}\leq\mu_j\;\;\mbox{and}\;\;\mu_{jN}\leq\mu_{jD},\;\; j=1,2,\ldots
$$
Since the coefficients in (\ref{Ma23aa}) belongs to $C^{0,\alpha}$, we obtain that
all eigenfunctions belong to $C^{2,\alpha}(\overline{\Omega})$,
see \cite{LU}, Chapter 4.

\begin{lemma}\label{AL2} Let $\psi_x\neq 0$ in $\Omega_+$. Then the eigenvalues
$\mu_{1D}$ and $\mu_{1N}$ are simple and
$$
0>\mu_{1D}>\mu_{1N}.
$$
Furthermore,
$$
 \mu_{1N}=\mu_1,\;\;0=\mu_{2D}>\mu_{2N}
$$
  and
the corresponding
eigenfunction to $\mu_{2D}$ is odd and coincides with $\psi_x$. Moreover, $\mu_{3N}=\mu_2$
and $\mu_{3D}>0$.
\end{lemma}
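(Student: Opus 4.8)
The plan is to reduce the two problems on $\Omega$ to four mixed boundary value problems on the half-cell $\Omega_+$ and then read off all the assertions by counting eigenvalues. Since $\omega_*$, $\sigma$ and $\xi$ are even in $x$, the reflection $x\mapsto -x$ leaves the form ${\bf a}$ and the domain $\Omega$ invariant, so both the Neumann problem (\ref{Ma23aa})--(\ref{Ma23ab}) and the Dirichlet problem (\ref{Ma23aa}),(\ref{Ma23ac}) split into an even and an odd part. On $\Omega_+$ this amounts to four problems carrying the equation (\ref{Ma23aa}) together with a Neumann ($\mathrm{N}$) or Dirichlet ($\mathrm{D}$) condition at $x=0$ and at $x=\Lambda/2$; I write $\lambda^{\mathrm{NN}}_j\le\lambda^{\mathrm{NN}}_{j+1}$, and likewise $\lambda^{\mathrm{ND}}_j,\lambda^{\mathrm{DN}}_j,\lambda^{\mathrm{DD}}_j$, for their eigenvalues. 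The correspondences are even Neumann $=\mathrm{NN}$, odd Neumann $=\mathrm{DN}$, even Dirichlet $=\mathrm{ND}$, odd Dirichlet $=\mathrm{DD}$; hence
\[
\{\mu_{jN}\}=\{\lambda^{\mathrm{NN}}_j\}\cup\{\lambda^{\mathrm{DN}}_j\},\qquad
\{\mu_{jD}\}=\{\lambda^{\mathrm{ND}}_j\}\cup\{\lambda^{\mathrm{DD}}_j\}
\]
as sorted multisets. Moreover, extending an even function with a Neumann condition at $x=\Lambda/2$ to an even $\Lambda$-periodic function identifies the $\mathrm{NN}$ spectrum with the even periodic spectrum, so $\lambda^{\mathrm{NN}}_j=\mu_j$; similarly the $\mathrm{DD}$ problem is the odd $\Lambda$-periodic problem. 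The inclusions of form domains $V_{\mathrm{DD}}\subset V_{\mathrm{ND}},V_{\mathrm{DN}}\subset V_{\mathrm{NN}}$ give, by the min-max principle, the sandwich $\lambda^{\mathrm{NN}}_j\le\lambda^{\mathrm{ND}}_j,\lambda^{\mathrm{DN}}_j\le\lambda^{\mathrm{DD}}_j$ for every $j$.

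Next I would pin down the two ground states. By Lemma \ref{AL1} the $\mathrm{NN}$ ground state equals $\mu_1<0$ and is simple with an eigenfunction $w_1$ that is positive up to the boundary (strong maximum principle and Hopf lemma); since $w_1$ does not vanish at $x=0$ or $x=\Lambda/2$, it lies in neither $V_{\mathrm{ND}}$ nor $V_{\mathrm{DN}}$, whence $\mu_1=\lambda^{\mathrm{NN}}_1<\lambda^{\mathrm{ND}}_1,\lambda^{\mathrm{DN}}_1$. On the other side, $\psi_x$ is odd and $\Lambda$-periodic, solves (\ref{Ma23aa}) with $\mu=0$, and vanishes at $x=0$ and $x=\Lambda/2$, so it is an eigenfunction of $\mathrm{DD}$ with eigenvalue $0$; because $\psi_x\neq0$ in $\Omega_+$ it is of one sign there and is therefore the simple $\mathrm{DD}$ ground state, i.e. $\lambda^{\mathrm{DD}}_1=0$.

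The crux is to show the strict inequalities $\lambda^{\mathrm{ND}}_1<0$ and $\lambda^{\mathrm{DN}}_1<0$. Testing the $\mathrm{DN}$ (resp. $\mathrm{ND}$) Rayleigh quotient with $\psi_x$ — which lies in $V_{\mathrm{DN}}$ and in $V_{\mathrm{ND}}$ since it vanishes at $x=0$ and at $x=\Lambda/2$ — gives quotient $0$, so $\lambda^{\mathrm{DN}}_1,\lambda^{\mathrm{ND}}_1\le0$. Suppose one of them equalled $0$; then $\psi_x$ would be a minimizer and would have to satisfy the missing natural Neumann condition, namely $\psi_{xx}=0$ on $\{x=\Lambda/2\}$ (resp. $\{x=0\}$). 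Together with $\psi_x=0$ there this gives vanishing Cauchy data for the linear elliptic equation satisfied by $\psi_x$, and unique continuation forces $\psi_x\equiv0$, contradicting $\psi_x\neq0$ in $\Omega_+$. Hence $\lambda^{\mathrm{DN}}_1,\lambda^{\mathrm{ND}}_1<0$. I expect this Cauchy-uniqueness step to be the main obstacle, since it is precisely what separates the mixed ground states strictly from the periodic value $0$.

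Finally I would invoke the standing hypothesis (\ref{Mabb}), $\mu_2>0$. The sandwich with $j=2$ gives $\lambda^{\mathrm{ND}}_2,\lambda^{\mathrm{DN}}_2,\lambda^{\mathrm{DD}}_2\ge\lambda^{\mathrm{NN}}_2=\mu_2>0$, so each of the four problems has exactly one eigenvalue strictly below $\mu_2$: $\mathrm{NN}$ contributes $\mu_1<0$, $\mathrm{DN}$ contributes $\lambda^{\mathrm{DN}}_1<0$, $\mathrm{ND}$ contributes $\lambda^{\mathrm{ND}}_1<0$, and $\mathrm{DD}$ contributes $0$. Merging the spectra now reads off everything. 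For the Neumann problem the values below $\mu_2$ are $\mu_1<\lambda^{\mathrm{DN}}_1<0$ (no zero occurs), giving $\mu_{1N}=\mu_1$ simple, $\mu_{2N}=\lambda^{\mathrm{DN}}_1<0$ and $\mu_{3N}=\min(\mu_2,\lambda^{\mathrm{DN}}_2)=\mu_2$. For the Dirichlet problem the ordered values are $\lambda^{\mathrm{ND}}_1<0=\lambda^{\mathrm{DD}}_1<\min(\lambda^{\mathrm{ND}}_2,\lambda^{\mathrm{DD}}_2)$, giving $\mu_{1D}=\lambda^{\mathrm{ND}}_1$ simple, $\mu_{2D}=0$ with the odd eigenfunction $\psi_x$, and $\mu_{3D}>0$. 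Combining with $\mu_1<\lambda^{\mathrm{ND}}_1<0$ from the second and third paragraphs yields $0>\mu_{1D}>\mu_{1N}$ and $0=\mu_{2D}>\mu_{2N}$, which completes all the assertions.
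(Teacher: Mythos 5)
Your proposal is correct and follows essentially the same route as the paper: the reflection $x\mapsto -x$ splits the Neumann and Dirichlet problems on $\Omega$ into the four mixed problems NN, DN, ND, DD on $\Omega_+$, the NN spectrum is identified with the even $\Lambda$-periodic one, $\psi_x$ is recognized as the simple DD ground state with eigenvalue $0$, and the full spectra are recovered by merging, using min-max sandwiches between the four problems. Two places where your write-up is in fact more careful than the paper's own text deserve mention. First, you supply an actual argument for the strict inequalities $\mu_{1DN},\mu_{1ND}<0$ (testing the Rayleigh quotient with $\psi_x$ and excluding equality via vanishing Cauchy data and unique continuation), where the paper simply asserts the strict orderings $\mu_{jDD}>\mu_{jDN}>\mu_{jNN}$, $\mu_{jDD}>\mu_{jND}>\mu_{jNN}$ without proof. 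Second, your merging step correctly gives $\mu_{2N}=\mu_{1DN}<0$, consistent with the statement $0=\mu_{2D}>\mu_{2N}$; the paper's final paragraph instead says that $\mu_{2N}$ coincides with $\mu_{2NN}$ or $\mu_{2DN}$ and is therefore positive, which contradicts the very lemma being proved --- that sentence is evidently a slip (it describes $\mu_{3N}$, not $\mu_{2N}$), and your version is the correct one. Finally, you make explicit that the conclusions $\mu_{2D}=0$, $\mu_{3N}=\mu_2$ and $\mu_{3D}>0$ require the standing hypothesis (\ref{Mabb}), which the paper invokes only tacitly when it writes $\mu_{2NN}=\mu_2>0$; this is a point worth keeping explicit.
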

\begin{proof} The proof of simplicity of $\mu_{1N}$ and $\mu_{1D}$ together with the
inequality
$\mu_{1D}>\mu_{1N}$ is quite standard and we omit it.

Let us show that $0>\mu_{1D}$. Consider the function $v=\psi_x$.
It satisfies the problem (\ref{Se28be}) with $\mu=0$. Since the
corresponding function is odd
it changes sign in $\Omega$. This implies $0>\mu_{1D}$. Moreover using that $v\neq 0$
in $\Omega_+$, we obtain that
$v$ is the second eigenfunction and $\mu_{2D}=0$ and the corresponding eigenfunction
is $v$.
Since the eigenfunctions corresponding to the eigenvalues
$\mu_1$ and $\mu_{1N}$ do not change sign in $\Omega$ we conclude $\mu_1=\mu_{1N}$.

The important property for the eigenfunctions of
the Dirichlet and Neumann problems for (\ref{Ma23aa}) is the following.
 One can verify that, if $w(x,y)$ is an eigenfunction
then $w(-x,y)$ is also an eigenfunction
corresponding to the same eigenvalue and hence the functions
$$
v_\pm=w(x,y)\pm w(-x,y)
$$
are also eigenfunctions or one of them can be zero. 
To proceed further it is convenient to introduce some more
eigenvalue problems. Considering the equations (\ref{Ma23aa}) only on the domain
$\Omega_+$
with Dirichlet or Neumann boundary conditions for $x=0$ and $x=\Lambda/2$ we get
four spectral problems with
DD, DN, ND and NN boundary conditions at $x=0$ and $x=\Lambda/2$. We denote corresponding
eigenvalues by
$$
\mu_{1DD}<\mu_{2DD}\leq\cdots,\;\;\mu_{1DN}<\mu_{2DN}\leq\cdots,\;\;
\mu_{1ND}<\mu_{2ND}\leq\cdots,\;\;
\mu_{1NN}<\mu_{2NN}\leq\cdots
$$
Clearly,
$$
\mu_{jDD}>\mu_{jDN}>\mu_{jNN}\;\;\mbox{and}\;\;\mu_{jDD}>\mu_{jND}>\mu_{jNN}\;\;
\mbox{for $j=1,2,\ldots$}
$$
Eigenfunctions corresponding to $\mu_{jDD}$ and $\mu_{jDN}$ can be extended for
all $x$ as odd,
and eigenfunctions corresponding to $\mu_{jND}$ and $\mu_{jNN}$ can be extended as
even.
Furthermore, all eigenvalues with the first index $1$ are simple and corresponding
eigenfunctions do not change sign in
$\Omega_+$ and
$$
\mu_{1D}=\mu_{1ND},\;\;\mu_{1N}=\mu_{1NN}=\mu_1<0,\;\;\mu_{2D}=\mu_{1DD}=0,\;\;
\mu_{2NN}=\mu_2>0
$$
The eigenvalue $\mu_{2N}$ coincides with $\mu_{2NN}$ or $\mu_{2DN}$, which is greater
 than $\mu_{2NN}$,  and hence
greater than $0$. The eigenvalue $\mu_{3D}$
coincides with $\mu_{2ND}$ or $\mu_{2NN}$ and hence also greater than $0$.
This completes the proof.

\end{proof}

\subsection{Proof of Theorem \ref{TOkt13a}}\label{SOkt22a}

\begin{proof}
We introduce an auxiliary spectral problem
\begin{eqnarray}\label{Ma24aa}
&&((\partial_x+z)^2+\partial_y^2)
u+\omega_* u+\mu u=0\;\;\mbox{in $D$},\nonumber\\
&&\nabla\hat{\psi}\cdot\nabla_z u-\sigma(x)u=0\;\;\mbox{for $y=\hat{\xi}(x)$},\nonumber\\
&&u(x,-d)=0,
\end{eqnarray}
where $z=i\tau$, $\tau$ is real and $w$ is $\Lambda$--periodic.
One can check that this is a spectral problem for self-adjoint operator for every
real $\tau$. We denote by
$$
\widehat{\mu}_1(\tau)\leq \widehat{\mu}_2(\tau)\leq\cdots
$$
its eigenvalues numerated in non-decreasing order and taking into account 
the multiplicities of eigenvalues.

Let
\begin{eqnarray*}
&&H^1_0(\Omega,\tau)=\{w=e^{i\tau x}v\,:\,v\in H^1_{0,\textrm{per}}(\Omega)\},\\
&&H^1_{0,\textrm{per}}(\Omega)=\{v\in H^1_0(\Omega)\,:\,v(-\Lambda/2,y)=v(\Lambda/2,y),
 y\in (-d,\xi(x))\}.
\end{eqnarray*}
These spaces coincide for $\tau=k\tau_*$, $k\in\Bbb Z$.
The above spectral problem can be reformulated as follows
\begin{equation}\label{Ma24b}
a(w,u)=\mu(w,u)_{L^2(\Omega)}\;\;\mbox{for all $v\in H^1_0(\Omega,\tau)$},
\end{equation}
where $w\in H^1_0(\Omega,\tau)$.

Let us prove that
\begin{equation}\label{J19a}
\mu_{jN}<\widehat{\mu}(\tau)_j<\mu_{jD},\;\;j=1,2,\ldots,
\end{equation}
for $\tau\in (0,\tau_*)$. Indeed, since $\tau\in (0,\tau_*)$,
$$
H^1_{0,\textrm{per}}(\Omega)\supset H^1_{0}(\Omega,\tau)
\supset H^1_{0,0}(\Omega)
$$
and both inclusions are strick.
Using the mini-max definition of the eigenvalues one can show
that
\begin{equation}\label{J19aa}
\mu_{jN}\leq\widehat{\mu}(\tau)_j\leq\mu_{jD},\;\;j=1,2,\ldots.
\end{equation}
To prove that the inequalities are strong assume that
$\mu_{jN}=\widehat{\mu}(\tau)_j$ for a certain $j$.
Then one can show that
there is an eigenfunction $v$, which simultaneously satisfies the Neumann and Dirichlet 
spectral problems
with the same eigenvalue. This implies that the eigenfunction
together with its normal derivative
vanish for $q=\pm\Lambda/2$. Hence the eigenfunction is identically zero.

The inequality (\ref{J19aa}) together with Lemma \ref{AL2} imply
$$
\widehat{\mu}_2(\tau)<0\;\;\mbox{and}\;\;\widehat{\mu}_3(\tau)>0,
$$
which in turn leads to the proof of our theorem.

\end{proof}

\subsection{Proof of Corollary \ref{CorOkt22}}\label{CorOkt22a}

Let $M$ be a positive integer and $v$ be a $(2M+1)\Lambda$--periodic function in $D$.
Introduce the following transformation
\begin{equation}\label{Okt22a}
V(v)(\tau_m,x,y)=V(\tau_m,x,y)=
\sum_{k=-M}^Me^{-i\tau_m(x+k\Lambda)}v(x+k\Lambda,y)
\end{equation}
and
\begin{equation}\label{Okt23a}
({\mathcal M}v)(x,y)=(V(\tau_{-M},x,y),V(\tau_{-M+1},x,y),\ldots,V(\tau_M,x,y)),
\end{equation}
where
$$
\tau_m=m\tau_1,\;\;\tau_1=\frac{2\pi}{(2M+1)\Lambda}.
$$
One can verify that the function $V(\tau_m,x,y)$ is periodic  with respect of $x$
 with the period $\Lambda$ for every  $m$ and
 ${\mathcal M}v$ is a vector function 
consisting of $2M+1$,  $\Lambda$--periodic functions.
 Moreover, if $v$ is an even function, then
 \begin{equation}\label{Okt24aa}
 V(\tau_m,-x,y)=V(-\tau_m,x,y)\;\;\mbox{for $m=0,\pm\tau_1,\ldots,\pm\tau_M$},
 \end{equation}
 and hence $V(\tau_0,x,y)$ is even with respect to $x$.

  We introduce one more operator by
 $$
 ({\mathcal N}V)(x,y)=\sum_{m=-M}^Me^{i\tau_mx}V(\tau_m,x,y),
 $$
 where $V$ is a $\Lambda$--periodic function with respect to $x$ for each $\tau_m$.
Applying the operator ${\mathcal N}$ to  $V$ given by (\ref{Okt22a}), we have
\begin{eqnarray*}
&&({\mathcal N}V)(x,y)=
\sum_{m=-M}^Me^{i\tau_mx}\sum_{k=-M}^Me^{-i\tau_m(x+k\Lambda)}v(x+k\Lambda,y)\\
&&=\sum_{k=-M}^Mv(x+k\Lambda,y)\sum_{m=-M}^M\Big(e^{-ik2\pi/(2M+1)}\Big)^m=
(2M+1)v(x,y).
\end{eqnarray*}
Thus the operator $(2M+1)^{-1}{\mathcal N}$ is inverse to ${\mathcal M}$.

To estimate norms of just introduced operators we note that
$$
\sum_{m=-M}^M||({\mathcal M}v)(\tau_m,x,y)||^2_{L^2(\Omega)}=(2M+1)||v||^2_{L^2(\Omega_M)},
$$
where
$$
\Omega_M=\{(x,y)\,:\,(-M-1/2)\Lambda<x<(M+1/2)\Lambda, \;-d<y<\xi(x)\}.
$$
Similar to \cite{N1} one can show
$$
\sum_{m=-M}^M||(V)(\tau_m,x,y)||^2_{H^2(\Omega)}\leq C_1||v||^2_{H^2(\Omega_M)}
$$
and
$$
||{\mathcal N}(V)||^2_{H^2(\Omega_M)}\leq
C_2\sum_{m=-M}^M||V(\tau_m,x,y)||^2_{H^2(\Omega)}.
$$

The important property of the transformation (\ref{Okt22a}) is the following
\begin{equation*}
A(x,y,\partial_x,\partial_y)({\mathcal N}V)(x,y)
=\sum_{m=-M}^Me^{i\tau_mx}A(x,y,\partial_x+i\tau_m,\partial_y)V(\tau_m,x,y)
\end{equation*}
and
\begin{equation*}
B(x,y,\partial_x,\partial_y)({\mathcal N}V)(x,y)
=\sum_{m=-M}^Me^{i\tau_mx}B(x,y,\partial_x+i\tau_m,\partial_y)V(\tau_m,x,y).
\end{equation*}
Therefore, if
\begin{eqnarray*}
&&A(x,y,\partial_x,\partial_y)({\mathcal N}V)(x,y)=0,\;\;\mbox{in $D$}\\
&&B(x,y,\partial_x,\partial_y)({\mathcal N}V)(x,y)=0
\end{eqnarray*}
then due to Theorem \ref{TOkt13a} $V(\tau_m,x,y)=0$ for all $m\neq 0$.
If $m=0$ then using (\ref{Okt24aa}) we get that the function $V(0,x,y)$ is even
and again the reference to Theorem \ref{TOkt13a} gives $V(0,x,y)=0$. This implies
${\mathcal N}V=0$. Thus the operator
$$
(A,B)\,:\, C^{2,\alpha}_{0,e,Mp}(D)\rightarrow C^{0,\alpha}_{0,e,Mp}(D)
\times C^{0,\alpha}_{e,Mp}(\Bbb R)
$$
is invertible.
Here the index $Mp$ indicate that we are dealing here with
$(2M+1)\Lambda$--periodic functions. Now small perturbation arguments lead
to invertibility of the corresponding nonlinear problem, which leads to the required result.

\subsection{Generalized eigenfunctions for the eigenvalue $z=0$}\label{SOkt25b}

The eigenvalue $z=0$ of the problem (\ref{Se28bd}) is simple and has the
eigenfunction $w=\psi_x$. To find generalized eigenfunctions is equivalent to finding
of the Floque solutions to the problem (\ref{Okt26a}). Then the generalized
eigenfunctions are coefficients in the Floquet solutions.

The first generalized eigenfunction $u_1$ is the coefficient in the Floquet solution
$$
u=xu_0+ u_1,\;\;u_0=\psi_x.
$$
Then
\begin{eqnarray}\label{Okt17a}
&&(\Delta+\omega_*)u_1+2u_{0x}=0\;\;\mbox{in $D$},\nonumber\\
&&\nabla\psi\cdot\nabla u_1-\sigma u_1+\psi_xu_{0}=0\;\;\mbox{for $y=\xi(x)$}.
\end{eqnarray}
The function $u_1$ is even, $\Lambda$-periodic solution to (\ref{Okt17a}) satisfying
$u_1(x,-d)=0$. Since $\mu_1<0$ and $\mu_2>0$ this problem is uniquely solvable.

Let us find $u_1$. We assume that we have a branch of solutions parameterized by the
period $\Lambda$, which depends on $t$ now. We introduce
$$
\lambda=\frac{\Lambda(0)}{\Lambda(t)}
$$
and make the change of variables
$$
X=\lambda x, \;\;\partial_x=\lambda\partial_X.
$$
Then the problem can be rewritten as a problem with the same period $\Lambda_0$ as
\begin{eqnarray}\label{Okt18a}
&&(\lambda^2\partial_X^2+\partial_y^2+\omega_*)u_1+2\lambda u_{0X}=0\;\;
\mbox{in $\widehat{D}$},\nonumber\\
&&(\lambda^2\psi_X u_{1X}+\psi_yu_{1y}-\sigma u_1+2\lambda\psi_Xu_{0}=0\;\;
\mbox{for $y=\xi(X)$}
\end{eqnarray}
Consider the problem for $\psi$ in $(X,y)$ variables:
\begin{eqnarray*}
&&(\lambda^2\partial_X^2+\partial_y^2)\psi+\omega(\psi)=0\;\;\mbox{in $\widehat{D}$}\\
&&\frac{1}{2}(\lambda^2\psi_x^2+\psi_y^2)+\xi=R\;\;\mbox{for $y=\xi(X,t)$}\\
&&\psi(X,\xi(X),t)=0,\;\;\psi(X,-d)=p_0.
\end{eqnarray*}
Differentiating these relations with respect to $t$, we get
\begin{eqnarray*}
&&(\lambda^2\partial_X^2+\partial_y^2)\psi_t+\omega'(\psi)\psi_t=-2\lambda\lambda'
\psi_{XX}\;\;
\mbox{in $\widehat{D}$}\\
&&(\lambda^2\psi_x\psi_{tx}+\psi_y\psi_{ty})+(\lambda^2\psi_x\psi_{xy}+\psi_y\psi_{yy})
\xi_t+\xi_t=-\lambda\lambda'\psi_{X}^2\;\;\mbox{for $y=\xi(X,t)$}\\
&&\psi_t(X,\xi(X),t)+\psi_y\xi_t=0,\;\;\psi_t(X,-d)=0.
\end{eqnarray*}
It can be transformed to
\begin{eqnarray*}
&&(\lambda^2\partial_X^2+\partial_y^2)\psi_t+\omega'(\psi)\psi_t=-2\lambda\lambda'
\psi_{XX}\;\;
\mbox{in $\widehat{D}$}\\
&&(\lambda^2\psi_x\psi_{tx}+\psi_y\psi_{ty})-\sigma_\lambda \psi_t
=-\lambda\lambda'\psi_x^2\;\;\mbox{for $y=\xi(X,t)$}\\
&&\psi_t(X,-d)=0,
\end{eqnarray*}
where
$$
\sigma_\lambda=-\frac{\lambda^2\psi_x\psi_{xy}+\psi_y\psi_{yy}+1}{\psi_y}.
$$
Therefore
$$
u_1=\frac{1}{\lambda\lambda'}\psi_t.
$$

The second generalized eigenvector $u_2$ is the last term in possible solution
$$
\frac{x^2}{2}u_0+xu_1+u_2.
$$
Then $u_2$ must satisfy
\begin{eqnarray}\label{Okt17aa}
&&(\Delta+\omega_*)u_2+u_{0}+2u_{1x}=0\;\;\mbox{in $D$},\nonumber\\
&&\nabla\psi\cdot\nabla u_2-\sigma u_2+\psi_xu_{1}=0\;\;\mbox{for $y=\xi(x)$},\\
&&u_2(x,-d)=0.
\end{eqnarray}
The function must be $u_2$ is odd and $\Lambda$-periodic.
The solvability condition for (\ref{Okt17aa}) is the following
\begin{equation}\label{Okt19a}
\int_\Omega (u_0+2u_{1x})u_0dxdy-\int_S\psi_xu_{1}u_0\frac{dx}{\psi_y}=0.
\end{equation}
Consider an analytic branch of water waves starting from a laminar flow,
where the period is considered as a parameter. Near a laminar flow $(\hat{psi}(y),0)$ the
branch is described by
$$
\psi(x,y,t)=\hat{psi}(y)+t\cos(\tau_*x)\gamma(y,\tau_*)+O(t^2),
$$
$$
\xi(x,t)=t\frac{\tau_*\sin(\tau_*x)}{\hat{\psi}_y(y)}\gamma(0,\tau_*)+O(t^2)
$$
and
$$
\lambda=1-ct^2+O(t^4).
$$
Therefore
$$
u_0=-t\tau_*\sin(\tau_*x)\gamma(y,\tau_*)+O(t^2),
$$
$$
u_1=-\frac{1}{2ct}\cos(\tau_*x)\gamma(y,\tau_*)+O(1)
$$
and
$$
\lambda'=-2ct+O(t^3).
$$
Since the left-hand side (LHS) of (\ref{Okt19a}) is
$$
LHS=-\frac{\tau_*}{2c}\int_\omega \sin^2(\tau_*x)\gamma^2)y,\tau_*)dxdy+O(t),
$$
it is different from zero for small $t$. Since the function $LHS(t)$ is analytic, it does
not vanish except some isolated $t$-points.

\section*{Acknowledgments}

This work was supported by the Ministry of Science and Higher Education of
the Russian Federation (agreement 075-15-2025-344 dated 29/04/2025 for
Saint Petersburg Leonard Euler International Mathematical Institute at PDMI RAS).

\section{References}

{

\end{document}